\numberwithin{equation}{section}
\numberwithin{figure}{section}
\newtheorem{Theorem}{Theorem}[section]
\newtheorem{Lemma}[Theorem]{Lemma}
\newtheorem{Remark}[Theorem]{Remark}
\newtheorem{Example}[Theorem]{Example}
\newtheorem{Proposition}[Theorem]{Proposition}
\newtheorem{Definition}[Theorem]{Definition}
\newtheorem{Corollary}[Theorem]{Corollary}
\def\bZ{\boldsymbol Z}
\def\be{\boldsymbol e}
\def\bz{\boldsymbol z}
\def\min{\wedge}
\newcommand{\deltain}{\delta_{\text in}}
\newcommand{\deltaout}{\delta_{\text out}}
\newcommand{\bthe}{\begin{Theorem}}
\newcommand{\ethe}{\end{Theorem}}
\newcommand{\ble}{\begin{Lemma}}
\newcommand{\ele}{\end{Lemma}}
\newcommand{\bde}{\begin{Definition}}
\newcommand{\ede}{\end{Definition}}
\newcommand{\bco}{\begin{Corollary}}
\newcommand{\eco}{\end{Corollary}}
\newcommand{\bpr}{\begin{Proposition}}
\newcommand{\epr}{\end{Proposition}}
\newcommand{\brem}{\begin{Remark}}
\newcommand{\erem}{\end{Remark}}
\newcommand{\bexam}{\begin{Example}}
\newcommand{\eexam}{\end{Example}}
\newcommand{\beqq}{\begin{equation}}
\newcommand{\eeqq}{\end{equation}}
\newcommand{\beao}{\begin{eqnarray*}}
\newcommand{\eeao}{\end{eqnarray*}\noindent}
\newcommand{\beam}{\begin{eqnarray}}
\newcommand{\eeam}{\end{eqnarray}\noindent}
\newcommand{\barr}{\begin{array}}
\newcommand{\earr}{\end{array}}
\newcommand{\bproof}{\begin{proof}}
\newcommand{\eproof}{\end{proof}}
\newcommand{\sid}[1]{{\color{black} #1}}
\newcommand{\Din}{D^{\text{in}}}
\newcommand{\Dout}{D^{\text{out}}}
\newcommand{\cin}{c_\text{in}}
\newcommand{\cout}{c_\text{out}}
\newcommand{\Nin}{N^{\text{in}}}
\newcommand{\Nout}{N^{\text{out}}}
\newcommand{\pin}{p^{\text{in}}}
\newcommand{\pout}{p^{\text{out}}}
\newcommand{\ain}{\iota_\text{in}}
\newcommand{\aout}{\iota_\text{out}}
\newcommand{\dd}{\mathrm{d}}
\newcommand{\PP}{\textbf{P}}
\newcommand{\EE}{\textbf{E}}
\newcommand{\ind}{\textbf{1}}
\newcommand{\convp}{\stackrel{P}{\longrightarrow}}
\newcommand{\convw}{\Rightarrow}
\newcommand{\convas}{\stackrel{\text{a.s.}}{\longrightarrow}}
\newcommand{\convv}{\stackrel{v}{\longrightarrow}}
\newcommand{\RR}{\mathbb{R}}
\newcommand{\bfD}{\boldsymbol{\mathcal{D}}}
\newcommand{\bd}{\boldsymbol{d}}
\newcommand{\origin}{\boldsymbol{0}}
\newcommand{\tw}[1]{{\color{black} #1}}
\newcommand{\Ia}{I^{(0)}}
\newcommand{\Ib}{I^{(1)}}
\newcommand{\Oa}{O^{(0)}}
\newcommand{\Ob}{O^{(1)}}
\newcommand{\Ebar}{\overline{E}}
\newcommand{\calI}{\mathcal{I}}
\newcommand{\calO}{\mathcal{O}}
\begin{document}
\bibliographystyle{plain}

\title[Degree Growth in Directed Networks]{Degree Growth Rates and Index Estimation in a Directed Preferential Attachment Model}

\author{Tiandong Wang and Sidney I. Resnick} \thanks{This work was supported by Army MURI grant W911NF-12-1-0385 to Cornell University.}

\begin{abstract}
Preferential attachment is widely used to model power-law behavior
of degree distributions in both directed and undirected networks.
In a directed preferential attachment model, despite the well-known marginal power-law degree distributions,
not much investigation has been done on the joint behavior of the in-  and out-degree growth.
Also, statistical estimates of the marginal tail exponent of the power-law degree distribution often use
the Hill estimator as one of the key summary statistics, even though no theoretical justification has been given.
This paper focuses on convergence of the joint empirical measure for in- and out-degrees and proves the
consistency of the Hill estimator. To do this, we first derive the asymptotic behavior of the joint degree sequences
by embedding the in- and out-degrees of a fixed node into a pair of switched birth processes with immigration and
then establish the convergence of the joint tail empirical measure. 
From these steps, the consistency of the Hill estimators is obtained.
\end{abstract}
\maketitle

\bigskip\noindent{\bf MSC Classes:} 60G70, 60B10, 60G55, 60G57, 05C80, 62E20.\smallskip\\
{\bf Keywords:} Hill estimators, power laws, preferential attachment, birth processes with immigration.

\section{Introduction.}
The preferential attachment model generates a growing sequence of random graphs based on the assumption that
popular nodes with large degrees attract more edges. Nodes and edges are added to the graph following probabilistic rules.
Such mechanism provides a basis for studying the evolution of social networks, collaborator and
citation networks, as well as recommender networks, and is applicable to both directed and undirected graphs.
Mathematical formulations of the undirected preferential attachment model are available in \cite{bhamidi:2007,durrett:2010b,vanderHofstad:2017},
and those of the directed model can be found in \cite{krapivsky:redner:2001,bollobas:borgs:chayes:riordan:2003}.
This paper only considers the directed model where at each stage, a new node is born and either it points to one of the existing nodes or one of the
existing nodes attaches to the new node.
Results on the degree growth in the undirected case are investigated in \cite{athreya:ghosh:sethuraman:2008,wang:resnick:2017}.

Empirical studies on social network data often reveal that in- and out-degree distributions marginally follow power laws.
Theoretically, this is also true for \emph{linear} preferential attachment models, which makes preferential attachment appealing in network modeling;
see \cite{bollobas:borgs:chayes:riordan:2003,krapivsky:2001,krapivsky:redner:2001} for references.
Also, the empirical joint degree frequency 
 converges to the probability mass function (pmf) of a pair of limit random variables that are jointly regularly varying 
(cf. \cite{krapivsky:redner:2001,wang:resnick:2016,resnick:samorodnitsky:towsley:davis:willis:wan:2016,resnick:samorodnitsky:2015}).
However, questions related to joint degree growth and index estimation still remain unresolved. In this paper, we focus on three main problems:
\begin{enumerate}
\item \sid{For a fixed node} in a linear preferential attachment graph, what is the joint
  behavior of in- and out-degree as the graph size grows?
\item 
\sid{What are the} convergence properties of the tail empirical joint
measure of in- and out-degrees indexed by node?
\item  \sid{When} estimating the marginal power-law indices of in- and
  out-degree, can we use the Hill estimator as a consistent estimator?
\end{enumerate}

What is the justification for interest in Hill estimation of power-law
indices for network data?
Repositories of large network datasets such 
as KONECT (http://konect.uni-koblenz.de/, \cite{kunegis:2013}) 
provide summary statistics for all the archived network datasets and
among the summary statistics are estimates of degree indices computed
with Hill estimators, despite the fact that evidence for Hill estimator consistency
is scant for network data \cite{wang:resnick:2017}.

Another justification is robust parameter estimation methods in
network models based on extreme value techniques.
In \cite{wan:wang:davis:resnick:2017b}, we couple the Hill estimation
of marginal degree distribution tail indices with a minimum distance threshold
selection method introduced in \cite{clauset:shalizi:newman:2009}  
and compare this method with the parametric estimation approaches used
in \cite{wan:wang:davis:resnick:2017}. The Hill estimation is more
robust against 
modeling errors and data corruptions. Therefore, an affirmative answer to the third question helps justify all of these inference methodologies.

In the directed case, consistency of the two marginal Hill estimators 
results from resolving 
the first two questions,
since in a similar vein to \cite{wang:resnick:2017}, we consider the
Hill estimator as a functional of the marginal tail empirical
measure. 
So convergence results of marginal tail empirical measures lead to the
consistency of Hill estimators by a mapping argument.

To answer the first question \sid{about degree behavior of fixed nodes
as graph size grows,} we mimic in- and out-degree growth of a fixed
node using pairs of \emph{switched birth processes with immigration}
(SBI processes). The SBI processes use Bernoulli switching between
pairs of independent \emph{birth processes with immigration} (BI
processes).  We embed the directed network growth model into a
sequence of paired SBI processes.  Whenever a new node is added to the
network, a new pair of SBI processes is initiated.  Using convergence
results for BI processes (cf. \cite[Chapter 5.11]{resnick:1992},
\cite{tavare:1987, wang:resnick:2017}), we give the joint limits of
the in- and out-degrees of a fixed node as well as the joint maximal
degree growth.  Proving the convergence of the tail empirical joint
measure in the second question requires showing concentration results
for degree counts compared with expected degree counts.  With
embedding techniques, we  prove the 
limit distribution of the empirical joint degree frequencies in a way
that is different from the one used in
\cite{resnick:samorodnitsky:towsley:davis:willis:wan:2016}, and then
justify the concentration results.

Our paper is structured as follows. In the rest of this section, we review background on the tail empirical measure
and Hill estimator. Section~\ref{sec:motiv} sets up the linear preferential attachment model
and formulates the power-law phenomena in network degree distributions.
Section~\ref{sec:SBI} summarizes facts about BI processes and introduces the SBI process, which is the foundation
of the embedding technique. We analyze the joint in- and out-degree growth in Section~\ref{sec:embed} by embedding
it into a sequence of paired SBI processes and derive convergence results of the in- and out-degrees for a fixed node.
Results on the convergence of the joint empirical measure are given in Section~\ref{sec:conv} and the consistency of
Hill estimators for both in- and out-degrees is proved in Section~\ref{sec:Hill}.
Useful concentration results are collected in Section~\ref{sec:conc}.

\subsection{Background}
Our approach to the Hill estimator considers it as a functional
of the tail empirical measure so we start with necessary background
and review  standard results (cf. \cite[Chapter 3.3.5 and 6.1.4]{resnickbook:2007}).  

\subsubsection{Non-standard regular variation.}
Let $M_+([0,\infty]^2\setminus \{\boldsymbol 0\})$ be the set of Radon measures on $[0,\infty]^2_+\setminus \{\boldsymbol 0\}$.
Then a random vector $(X,Y)$ is {\it non-standard regularly varying} on
$[0,\infty]^2_+\setminus \{\boldsymbol 0\}$ if there exist scaling functions $b_i(t)\to\infty$, $i=1,2$ such
 that as $t\to\infty$,
\beqq\label{eq:MRV}
t\PP\left[\left(\frac{X}{b_1(t)},\frac{Y}{b_2(t)}\right)\in\cdot\right]\convv \nu(\cdot),\quad \mbox{in }M_+([0,\infty]^2\setminus \{\boldsymbol 0\}),
\eeqq
where  $\nu(\cdot) \in M_+([0,\infty]^2\setminus \{\boldsymbol 0\})$
is called the limit or tail measure
\cite{resnick:samorodnitsky:2015, resnick:samorodnitsky:towsley:davis:willis:wan:2016}, and ``$\convv$" denotes the vague convergence of measures
in $M_+([0,\infty]^2\setminus \{\boldsymbol 0\})$.
The phrasing in \eqref{eq:MRV} implies the marginal distributions have regularly varying tails.

\subsubsection{Hill Estimator}
\sid{For $x\in (0,\infty]$, define the measure
$\epsilon_x(\cdot)$ on Borel subsets $A$ of $(0,\infty]$ by }
\[
\epsilon_x(A)=
\begin{cases}
1\qquad x\in A,\\
0\qquad x\notin A,
\end{cases}
\quad \text{for }A\in\mathcal{E}. 
\]
Let $M_+((0,\infty])$  be the set
of  non-negative Radon measures on $(0,\infty]$. 
A point measure $m$ is an element of $M_+((0,\infty])$ of the form
\beqq\label{eq:pm}
m = \sum_i\epsilon_{x_i}.
\eeqq

For $\{X_n, n\geq 1\}$ iid  and non-negative with common
regularly varying distribution
tail $\overline{F}\in RV_{-\iota}$, $\iota>0$, there  
exists a sequence $\{b(n)\}$ \sid{satisfying $P[X_1>b(n)]\sim 1/n$,}
such that  
for any $k_n\to\infty$, $k_n/n\to 0$, 
\beqq\label{eq:bnkX}
\frac{1}{k_n}\sum_{i=1}^n\epsilon_{X_i/b(n/k_n)}\Rightarrow \nu_\iota,\quad\text{ in }M_+((0,\infty]),
\eeqq
where the limit measure $\nu_\iota$ satisfies $\nu_\iota(y,\infty] =y^{-\iota}$, $y>0$.

Define the Hill estimator $H_{k,n}$ based on $k$ upper order statistics of
$\{X_1,\dots,X_n\}$ as \cite{hill:1975}
\beqq\label{eq:hill_def}
H_{k,n} := \frac{1}{k}\sum_{i=1}^{k}\log\frac{X_{(i)}}{X_{(k+1)}},
\eeqq
where $X_{(1)}\ge X_{(2)}\ge \ldots\ge X_{(n)}$ are order statistics of $\{X_i:1\le i\le n\}$.
In the iid case there are many proofs of consistency \cite{mason:1982,mason:turova:1994,hall:1982,dehaan:resnick:1998,
  csorgo:haeusler:mason:1991a}:  For
$k=k_n\to\infty,\,k_n/n\to 0$, we have
\begin{equation}\label{e:Hillconv}
H_{k_n,n} \convp 1/\iota\qquad\text{as }n\to\infty.
\end{equation}
The treatment in \cite[Theorem~4.2]{resnickbook:2007} approaches 
consistency by showing \eqref{e:Hillconv} follows from \eqref{eq:bnkX}
and we follow this approach for the network context where the iid case
is inapplicable.

\subsubsection{Node degrees.}
The next section constructs a directed preferential attachment model,
and gives behavior of $\bigl(\Din_v(n),\Dout_v(n)\bigr)$, the in- and
out-degrees of node 
$v$ at the $n$th stage of construction.  \sid{These degrees when scaled by
appropriate powers of $n$ (see \eqref{eq:conv_pair}) have limits and}
Theorem~\ref{thm:tail_meas}
shows that 
the degree sequences $\bigl(\Din_v(n), \Dout_v(n)\bigr)_{1\le v\le n}$
have a joint \sid{tail}
empirical measure
\beqq\label{emp_meas}
\frac{1}{k_n}\sum_{v} \epsilon_{\bigl(\Din_v(n)/b_1(n/k_n), \Dout_v(n)/b_2(n/k_n)\bigr)}
\eeqq
that converges weakly to some limit measure in $M_+([0,\infty]^2\setminus\{\boldsymbol{0}\})$,
\sid{where $b_1(n),b_2(n)$ are appropriate power law scaling functions and}
$k_n$ is some intermediate sequence such that
\[
k_n/n\to 0,\qquad k_n\to\infty,\quad \text{as }n\to\infty.
\]
It also follows from \eqref{emp_meas} that for some 
tail indices $\ain$, $\aout$, and intermediate sequence $k_n$,
\begin{align}
\frac{1}{k_n}\sum_{v}\epsilon_{\Din_v(n)/b_1(n/k_n)}&\Rightarrow \nu_{\ain},\quad\text{ in }M_+((0,\infty]),\label{eq:conv_meas_in}\\
\frac{1}{k_n}\sum_{v}\epsilon_{\Dout_v(n)/b_2(n/k_n)}&\Rightarrow \nu_{\aout},\quad\text{ in }M_+((0,\infty]).\label{eq:conv_meas_out}
\end{align}
This leads to consistency of the Hill estimator \sid{for $\ain$ and $\aout$.}

\section{Preferential Attachment Models.}\label{sec:motiv}
\subsection{Model setup.}\label{subsec:PA}
\sid{Consider $\{G(n), n\geq 1\}$, a growing sequence of preferential
attachment graphs.
The graph $G(n)$ consists of  $n$ nodes, denoted by $ [n]:= \{1,2, \ldots, n\}$,
and $n$ directed edges; the set of edges of $G(n)$ consisting of
ordered pairs of nodes in $[n]$ is denoted by  $E(n)$. 
The initial graph  $G(1)$
consists of one node, labeled node 1, with a self loop.} Thus node 1 has
in- and out-degrees both equal to 1.
For $n\ge 1$,
we obtain a new graph $G(n+1)$ by appending a new node ${n+1}$ \sid{and a
new directed edge} to the existing graph $G(n)$ \sid{according to
probabilistic rules described below.}
For $v\in [n]$,  $(\Din_v(n),\Dout_v(n))$ are the in- and out-degree
of node $v$ in $G(n)$. 
The direction of the new edge in $G(n+1)$ is determined by flipping a
2-sided coin, which has probabilities  
$\alpha \in (0,1)$ and $1-\alpha \equiv \gamma$, such that 
given $G(n)$ and two positive parameters $\deltain, \deltaout>0$ (not necessarily equal):
\begin{itemize}
\item \sid{If the coin comes up heads} with probability $\alpha$, 
\sid{direct the} new edge from the new node ${n+1}$ to the
 existing node $v\in [n]$  with probability depending on the in-degree of $v$ in $G(n)$:
\beqq\label{eq:in}
\PP(v\in [n] \text{ is chosen}) = \frac{\Din_v(n)+\deltain}{(1+\deltain)n}.
\eeqq
\item \sid{If the coin comes up tails} with probability $\gamma$,
\sid{direct the} new edge from an existing node $v\in [n]$ to the new
node ${n+1}$, with probability depending on the out-degree of $v$ in $G(n)$:
\beqq\label{eq:out}
\PP(v\in [n]\text{ is chosen}) = \frac{\Dout_v(n)+\deltaout}{(1+\deltaout)n}.
\eeqq
\end{itemize}
We refer the two scenarios as $\alpha$- and $\gamma$-schemes, respectively.

\subsubsection{Model construction.}\label{subsub:construct}
One way to \sid{formally construct the model which helps with proofs}
 is by \sid{using} independent exponential random variables (r.v.'s).
Define \sid{derived parameters}
\begin{equation}\label{eq:cees}
\cin =\frac{\alpha}{1+\deltain}\qquad\text{and}\qquad \cout = \frac{\gamma}{1+\deltaout},
\end{equation}
and \sid{for $n\geq 1$, we will recursively define what corresponds to the in- and
out-degree sequences as random elements of $(\mathbb{N}_+^2)^\infty$},
\begin{equation}\label{eq:degreeSeq}
\bfD(n) := \bigl((\Din_1(n), \Dout_1(n)),\ldots, (\Din_n(n), \Dout_n(n)), (0,0),\ldots\bigr)
\end{equation}
with initialization
\begin{equation}\label{eq:init}
\bfD(1)=\bigl((1,1),(0,0),\dots \bigr)\end{equation}
\sid{corresponding to assuming $G(0)$ has a single node with a self
  loop.} For $k\ge 1$,  \sid{the recursive definition of $\{\bfD(n)\}$ uses
the variables }
\begin{align}
\be_k^\text{in} &:= ((0,0),\ldots,(0,0),\underbrace{(1,0)}_\text{$k$-th entry}, (0,0),\ldots),\label{eq:ein}\\
\be_k^\text{out} &:=
                   ((0,0),\ldots,(0,0),\underbrace{(0,1)}_\text{$k$-th
                   entry}, (0,0),\ldots),\label{eq:eout} 
\end{align}
and \sid{relies on competitions from exponential alarm clocks based on }
$\{E^{(n)}_k: k\ge 1, n\ge 1\}$,  a sequence of iid standard exponential r.v.'s.
\sid{Assuming $\bfD(n)$ has been given,} $\bfD(n+1)$ requires $\bfD(n)$ and
the $2n$ variables $\{E_j^{(n)}, j=1,\dots,2n\}$
\sid{which are independent of $\bfD(n)$ (which can be checked
  recursively) and we define }
\begin{align*}
\overline{E}^{(n)}_k & :=\frac{E^{(n)}_k}{\frac{\cin}{\cin+\cout}(\Din_k(n)+\deltain)},\qquad k=1, \ldots, n,\\
\overline{E}^{(n)}_k
                     &:=\frac{E^{(n)}_k}{\frac{\cout}{\cin+\cout}(\Dout_k(n)+\deltaout)},\qquad k=n+1, \ldots, 2n.
\end{align*}
Conditionally on $\bfD(n)$, use
the $\{\overline{E}^{(n)}_k: k=1,\ldots,2n\}$ to create
 a competition between exponentially distributed alarm clocks. 
For $\deltain, \deltaout>0$ and $n\ge 1$, define choice variables
\[
L_{n+1} = \sid{\sum_{l=1}^n} l \ind_{\left\{\Ebar^{(n)}_l < \bigwedge_{k=1, k\neq l}^{2n} \Ebar^{(n)}_k,\, 1\le l\le n\right\}}
+ \sid{\sum_{l=n+1}^{2n}} l \ind_{\left\{\Ebar^{(n)}_l < \bigwedge_{k=1, k\neq l}^{2n} \Ebar^{(n)}_k,\, n+1\le l\le 2n\right\}}.
\] 
\sid{So $L_{n+1}$ is the index of the minimum of
$\{\overline{E}^{(n)}_k, 1\leq k\leq 2n\}$ indicating the winner of
the competition.} Also, for $n\ge 1$, define the Bernoulli random variable 
$$
B_{n+1} := \ind_{\left\{\bigwedge_{k=1}^n\Ebar^{(n)}_k
    > \bigwedge_{k=n+1}^{2n}\Ebar^{(n)}_k\right\}}=\ind_{\{L_{n+1}>n\}},
$$
and given $\bfD(n)$, we have
\begin{equation}\label{eq:recursive}
\bfD(n+1)=\bfD(n)  +
(1-B_{n+1})\be_{L_{n+1}}^\text{in} + B_{n+1}\be_{L_{n+1}-n}^\text{out}
+ B_{n+1}\be_{n+1}^\text{in} + (1-B_{n+1})\be_{n+1}^\text{out}.
\end{equation}
\sid{This increments the $L_{n+1}$-st pair by $(1,0)$ if $B_{n+1}=0$ 
and the $(L_{n+1}-n)$-th pair by (0,1) if $B_{n+1}=1$; the first case corresponds to an increase of
in-degree and the second case to an increase of out-degree. The
recursion also assigns to pair $n+1$ either $(1,0)$ or $(0,1)$
depending on the case. This construction expresses $\bfD(n+1)$ as a
function of $\bfD(n)$ and something independent, namely $\{E_j^{(n)},
j=1,\dots,2n\}$ and therefore the process
 $\{\bfD(n), n\geq 1\}$ is an $(\mathbb{N}_+^2)^\infty$-valued Markov
chain. Also, because of the initialization \eqref{eq:init}, a simple
induction argument applied to \eqref{eq:recursive} gives the sum of
the components satisfies}
\beqq\label{eq:sum}
\sum_j \Din_j(n) =\sum_j \Dout_j(n) =n,\quad n\geq 1.\eeqq
Then using \eqref{eq:cees}, \eqref{eq:sum} and standard calculations with exponential
rv's, we have for $v\in [n]$, 
\begin{align}
\PP&\left(\bfD(n+1)=\bfD(n) + \be_v^\text{in} +
                       \be_{n+1}^\text{out}|\bfD(n) )=\PP(L_{n+1} = v
               \middle|\bfD(n)\right) \notag \\
=&\PP \left(\overline E_v^{(n)} <\bigwedge_{k=1,k\neq v}^{2n}\overline E_k^{(n)}\middle|\bfD(n)\right)
=
   \frac{\alpha(\Din_v(n)+\deltain)}{(1+\deltain)n},\label{eq:def_Lin}\\
\intertext{and likewise}
\PP&\left(\bfD(n+1)=\bfD(n) + \be_v^\text{out} +
                \be_{n+1}^\text{in}\middle|\bfD(n)\right)
= \PP(L_{n+1} =n+ v|\bfD(n))\notag \\
=&\PP \left(\overline E_{n+v}^{(n)} <\bigwedge_{k=1,k\neq  n+v}^{2n}\overline
   E_k^{(n)} \middle|\bfD(n)\right)
 = \frac{\gamma(\Dout_v(n)+\deltaout)}{(1+\deltaout)n}.\label{eq:def_Lout}
\end{align}
These probabilities agree with the attachment probabilities
\eqref{eq:in}, \eqref{eq:out} in $\alpha$- and $\gamma$-schemes, respectively.

\subsection{Power-law tails.}
Suppose $G(n)$ is a random graph generated by the dynamics above after $n$ steps. 
Let $N_{i,j}(n)$ be the number of nodes in $G(n)$ with in-degree $i$ and out-degree $j$, i.e.
\beqq\label{eq:defN}
N_{i,j}(n) := \sum_{v\in [n]} \ind_{\left\{\bigl(\Din_v(n), \Dout_v(n)\bigr) = (i,j)\right\}}, 
\eeqq
then $\Nin_i (n) := \sum_{j} N_{i,j}(n)$ and $\Nin_{>i}(n) := \sum_{k>i}\Nin_k(n)$ are the number of nodes in $G(n)$ with in-degree equal to and strictly greater than  $i$, respectively.
A similar definition also applies to out-degrees: $\Nout_j (n) := \sum_{i} N_{i,j}(n)$ and $\Nout_{>j}(n) := \sum_{k>j}\Nout_k(n)$.

It is
shown in \cite[Theorem~3.2]{bollobas:borgs:chayes:riordan:2003} 
using concentration inequalities and martingale methods
that 
for as $n\to\infty$,
\beqq\label{eq:pij}
\frac{N_{i,j}(n)}{n} \convp p_{ij},
\eeqq
where $p_{ij}$ is a probability mass function (pmf) and
\cite{wang:resnick:2016,
  resnick:samorodnitsky:towsley:davis:willis:wan:2016,
  resnick:samorodnitsky:2015} show that 
$p_{ij}$ is jointly regularly varying and so is the associated joint measure.
The analytical form of $p_{ij}$ is given in \cite{bollobas:borgs:chayes:riordan:2003}, but later in Section~\ref{subsec:Nij_conv}, we give another proof
using Section~\ref{sec:embed}'s  embedding technique.

From \cite[Theorem 3.1]{bollobas:borgs:chayes:riordan:2003}, the
\sid{scaled }marginal degree counts $\Nin_i(n)/n$ and $\Nout_j(n)/n$, $i,j\ge 0$, also
converge: 
\begin{align}
\frac{\Nin_0(n)}{n} & \convp \pin_0 = \frac{\alpha}{1+\cin\deltain},\qquad \frac{\Nout_0(n)}{n}  \convp \pout_0 = \frac{\gamma}{1+\cout\deltaout},\label{eq:p0}\\
\frac{\Nin_i(n)}{n} & \convp \pin_i = \frac{\Gamma(i+\deltain)}{\Gamma(i+1+\deltain+\cin^{-1})} \frac{\Gamma(1+\deltain+\cin^{-1})}{\Gamma(1+\deltain)}
\left(\frac{\alpha\deltain}{1+\cin\deltain}+\frac{\gamma}{\cin}\right),\quad i\ge 1,\label{eq:pin}\\
\frac{\Nout_j(n)}{n} & \convp \pout_j = \frac{\Gamma(j+\deltaout)}{\Gamma(j+1+\deltaout+\cout^{-1})} \frac{\Gamma(1+\deltaout+\cout^{-1})}{\Gamma(1+\deltaout)}
\left(\frac{\gamma\deltaout}{1+\cout\deltaout}+\frac{\alpha}{\cout}\right),\quad j\ge 1.\label{eq:pout}
\end{align}
\sid{Both} $\left(\pin_i\right)_{i\ge 0}$ and $\left(\pout_j\right)_{j\ge 0}$ are pmf's and the asymptotic form follows
 from Stirling's formula:
 \begin{align*}
 \pin_i \sim C_{IN} \cdot i^{-(1+\cin^{-1})}, &\qquad  i \to\infty,\\
 \pout_j \sim C_{OUT} \cdot j^{-(1+\cout^{-1})}, &\qquad  j\to\infty.
\end{align*}  

Let $\pin_{>i}=\sum_{k>i}\pin_k$ and $\pout_{>j}=\sum_{k>j}\pout_k$ be the complementary cdf's and
by Scheff\'e's lemma as well as
\cite[Equation (8.4.6)]{vanderHofstad:2017}, we have 
\begin{align}
\frac{\Nin_{>i}(n)}{n}\convp \pin_{>i} &:=
\frac{\Gamma(i+1+\deltain)}{\Gamma(i+1+\deltain+\cin^{-1})}\cin\frac{\Gamma(1+\deltain+\cin^{-1})}{\Gamma(1+\deltain)}
\left(\frac{\alpha\deltain}{1+\cin\deltain}+\frac{\gamma}{\cin}\right),\label{eq:def_pin_over}\\
\frac{\Nout_{>j}(n)}{n}\convp \pout_{>j} &:=
\frac{\Gamma(j+1+\deltaout)}{\Gamma(j+1+\deltaout+\cout^{-1})}\cout\frac{\Gamma(1+\deltaout+\cout^{-1})}{\Gamma(1+\deltaout)}
\left(\frac{\gamma\deltaout}{1+\cout\deltaout}+\frac{\alpha}{\cout}\right),\label{eq:def_pout_over}
\end{align}
so again by Stirling's formula we get from \eqref{eq:def_pin_over} and \eqref{eq:def_pout_over} that 
\begin{align*}
\pin_{>i}  \sim C'_{IN}\cdot  i^{-\cin^{-1}} =: C'_{IN}\cdot  i^{-\ain}, &\qquad  i \to\infty,\\
\pout_{>j} \sim C'_{OUT}\cdot  j^{-\cout^{-1}} =: C'_{OUT}\cdot  j^{-\aout}, &\qquad j\to\infty.
\end{align*}
In other words, the marginal tail distributions of the asymptotic in- and out-degree
sequences in a directed linear preferential attachment model are asymptotic
to power laws with tail indices $\ain\equiv\cin^{-1}$ and $\aout\equiv\cout^{-1}$, respectively. 


\section{Preliminaries: Switched Birth Immigration Processes.}\label{sec:SBI}
In this section, we introduce a pair of switched birth immigration processes (SBI processes). This lays the 
foundation for Section~\ref{sec:embed}, where we embed the in- and out-degree sequences of a fixed network node
into a pair of SBI processes and derive the asymptotic limit of the degree growth.

\subsection{Birth immigration processes.}
We start with a brief review of the birth immigration process.
A linear birth process with immigration (BI process),
$\{Z(t): t\ge 0\}$, having lifetime parameter $\lambda>0$ and
immigration parameter $\theta\ge 0$ is a continuous time Markov
process with state space $\mathbb{N} =\{0,1,2,3,\ldots\}$ and
transition rate  
$$q^Z_{k,k+1} = \lambda k + \theta,\qquad k\ge 0.$$
When $\theta = 0$ there is no immigration and the BI process becomes
a pure birth process and in such cases, the process usually starts from 1.

For $\theta>0$, the BI process starting from 0 can be
constructed from a Poisson process and an independent family of iid
linear birth processes \cite{tavare:1987}.
Suppose that $N_\theta (t)$ is the counting function of homogeneous
Poisson points $0<\tau_1<\tau_2<\ldots$ with rate
$\theta$ and independent of this Poisson process we have
 independent
copies of a linear birth process $\{\zeta_i(t):t\ge 0\}_{i\ge 1}$
with parameter $\lambda>0$ and $\zeta_i(0) = 1$ for $i\ge 1$.  
\sid{The  BI process $Z(t), t\geq 0$ is a shot noise process with $Z(0)=0$
and for $t\ge 0$,}
\beqq\label{eq:defBI}
Z(t) := \sum_{i=1}^\infty
\zeta_i(t-\tau_i)\ind_{\{t\ge\tau_i\}}=\sum_{i=1}^{N_\theta(t) } \zeta_i(t-\tau_i).
\eeqq

Theorem~\ref{thm:tavare} modifies slightly the statement of 
\cite[Theorem~5]{tavare:1987} summarizing  the asymptotic behavior of the BI process.
This is also reviewed in \cite{wang:resnick:2017}.
\begin{Theorem}\label{thm:tavare}
For $\{Z(t):t\ge 0\}$ as in \eqref{eq:defBI}, we have as $t\to\infty$,
\beqq \label{e:sigma}
e^{-\lambda t}Z(t) \convas \sum_{i=1}^\infty W_i e^{-\lambda\tau_i} =:\sigma
\eeqq
where $\{W_i: i\ge 1\}$ are independent unit exponential random
variables satisfying a.s. for each $i\ge 1$,
   $$W_i=\lim_{t\to\infty} e^{-t}\zeta_i(t).$$
The random variable $\sigma$ in \eqref{e:sigma}
is a.s. finite and has a Gamma density given by
\[
f(x) = \frac{1}{\Gamma(\theta/\lambda)}x^{\theta/\lambda-1} e^{-x},\qquad x>0.
\]
\end{Theorem}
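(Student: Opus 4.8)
The plan is to prove \eqref{e:sigma} in three steps — analyze a single Yule process, lift through the shot-noise representation \eqref{eq:defBI}, and then identify the law of the limit by a Laplace transform computation — and the only delicate point will be the passage from termwise to sum convergence. \emph{Step 1 (single birth process).} For fixed $i$, $\{\zeta_i(t)\}$ is a linear pure birth process with rate $\lambda$ and $\zeta_i(0)=1$; applying its generator to $f(k)=k$ gives $\E\zeta_i(t)=e^{\lambda t}$, so $\{e^{-\lambda t}\zeta_i(t):t\ge0\}$ is a nonnegative martingale and $e^{-\lambda t}\zeta_i(t)\convas W_i$ for some a.s.\ finite $W_i$. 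Since $\zeta_i(t)$ is geometric, $\PP(\zeta_i(t)=k)=e^{-\lambda t}(1-e^{-\lambda t})^{k-1}$ for $k\ge1$, the quantity $e^{-\lambda t}\zeta_i(t)$ converges in distribution to a unit exponential; combined with the a.s.\ convergence this identifies $W_i$ as unit exponential, and the $W_i$ are independent because the $\zeta_i$ are.

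\emph{Step 2 (the random sum).} From \eqref{eq:defBI} write $e^{-\lambda t}Z(t)=\sum_{i\ge1}e^{-\lambda\tau_i}R_i(t)$ with $R_i(t):=e^{-\lambda(t-\tau_i)}\zeta_i(t-\tau_i)\ind_{\{t\ge\tau_i\}}$, and note $R_i(t)\to W_i$ a.s.\ as $t\to\infty$ for each $i$ by Step 1. The candidate limit $\sigma=\sum_i W_ie^{-\lambda\tau_i}$ is a.s.\ finite because, conditioning on the Poisson process and using $\E W_i=1$ together with Campbell's formula, $\E\sigma=\theta\int_0^\infty e^{-\lambda s}\,\dd s=\theta/\lambda$. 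Truncating the sum at a fixed level $n$ and letting $t\to\infty$ through the finitely many terms with $\tau_i\le t$ gives $\liminf_t e^{-\lambda t}Z(t)\ge\sum_{i=1}^nW_ie^{-\lambda\tau_i}$ a.s., hence $\liminf_t e^{-\lambda t}Z(t)\ge\sigma$ a.s. For the reverse inequality I will use that $e^{-\lambda t}Z(t)$ itself converges: the generator of the BI process applied to $f(k)=k$ equals $\lambda k+\theta$, so $e^{-\lambda t}(Z(t)+\theta/\lambda)$ is a nonnegative martingale with constant mean $\theta/\lambda$, which therefore converges a.s.\ to a finite limit $\sigma'=\lim_t e^{-\lambda t}Z(t)$ with $\E\sigma'\le\theta/\lambda$ by Fatou. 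Since $\sigma'\ge\sigma$ a.s.\ while $\E\sigma=\theta/\lambda\ge\E\sigma'$, we conclude $\sigma'=\sigma$ a.s., which is \eqref{e:sigma}.

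\emph{Step 3 (law of $\sigma$).} Condition on $\{\tau_i\}$; since the $W_i$ are i.i.d.\ unit exponentials independent of $\{\tau_i\}$ and $\E e^{-aW_1}=(1+a)^{-1}$, we get $\E[e^{-u\sigma}\mid\{\tau_i\}]=\prod_i(1+ue^{-\lambda\tau_i})^{-1}$ for $u\ge0$. Taking expectations and applying the Laplace functional of the rate-$\theta$ Poisson process on $[0,\infty)$ with $g(s)=(1+ue^{-\lambda s})^{-1}$,
\[
\E e^{-u\sigma}=\exp\!\Big(-\theta\int_0^\infty\frac{ue^{-\lambda s}}{1+ue^{-\lambda s}}\,\dd s\Big)=\exp\!\Big(-\tfrac{\theta}{\lambda}\log(1+u)\Big)=(1+u)^{-\theta/\lambda},
\]
where the integral is evaluated by the substitution $w=ue^{-\lambda s}$. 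This is the Laplace transform of the Gamma distribution with shape $\theta/\lambda$ and rate $1$, whose density is the one displayed in the statement, so the proof is complete.

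\emph{Main obstacle.} The subtle point is Step 2: moving from the termwise a.s.\ limits $R_i(t)\to W_i$ to convergence of the $t$-indexed, randomly-terminated sum. A direct dominated-convergence argument is unavailable because the individual $R_i(t)$ are unbounded; the clean resolution is to obtain existence of $\lim_t e^{-\lambda t}Z(t)$ from the martingale attached to $Z$ and then pin the limit down by matching first moments against the Fatou lower bound, as above. (If one prefers to quote \cite[Theorem~5]{tavare:1987} directly, only the minor normalization bookkeeping remains.)
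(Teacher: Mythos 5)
Your proof is correct and, unlike the paper, is fully self-contained: the paper disposes of this theorem by citing \cite{tavare:1987} and \cite{wang:resnick:2017} rather than arguing it out (see Remark~\ref{rmk:tavare}). All three of your steps check: the Yule-process martingale $e^{-\lambda t}\zeta_i(t)$ together with the geometric law of $\zeta_i(t)$ correctly identifies $W_i$ as unit exponential (note the theorem statement's ``$e^{-t}\zeta_i(t)$'' is a typo for $e^{-\lambda t}\zeta_i(t)$, and your normalization is the right one); your resolution of the termwise-to-sum interchange via the two-sided squeeze --- $\liminf_t e^{-\lambda t}Z(t)\ge\sigma$ from nonnegativity of the truncated sum, plus a.s.\ convergence of $e^{-\lambda t}(Z(t)+\theta/\lambda)$ to some $\sigma'$ with $\E\sigma'\le\theta/\lambda=\E\sigma$ by Fatou, forcing $\sigma'=\sigma$ --- is a clean and valid Scheff\'e-type moment-matching argument; and the Laplace-functional computation $\E e^{-u\sigma}=(1+u)^{-\theta/\lambda}$ pins down the Gamma law. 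Where Tavar\'e identifies the Gamma limit essentially from the explicit negative binomial distribution of $Z(t)$ (the same fact the paper exploits later in \eqref{eq:nij_step2}), you instead read the law directly off the shot-noise representation of $\sigma$ via the Poisson Laplace functional; your route has the advantage of deriving the distribution from the limit object itself rather than from the prelimit marginals, at the cost of needing the independence of $\{W_i\}$ from $\{\tau_i\}$, which you correctly note follows from the construction.
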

\begin{Remark}\label{rmk:tavare}
{\rm
The form of $\sigma$  in \eqref{e:sigma} and its Gamma density is justified in
\cite{tavare:1987, wang:resnick:2017}. 
For a BI process $\{Z'(t)\}_{t\ge 0}$ with $Z'(0) = j\ge 1$, modifying the representation in \eqref{eq:defBI} gives
\[
Z'(t) = \sum_{i=1}^j\zeta_i(t) +\sum_{i=j+1}^\infty
\zeta_i(t-\tau_i)\ind_{\{t\ge\tau_i\}}.
\]
Therefore, $e^{-\lambda t}Z'(t)\convas \sigma'$ where $\sigma'$ has a Gamma density given by
$g(x) = x^{j+\theta/\lambda-1}e^{-x}/\Gamma(j+\theta/\lambda)$, $x>0$.}
\end{Remark}

\subsection{Switched birth immigration processes.}\label{subsec:switcheroo}
\sid{A switched birth immigration (SBI) process uses a Bernoulli
  choice variable to
choose randomly from two independent
BI processes with the same linear transition rates with one starting from $1$
at $t=0$ and the other starting from $0$. A {\it pair\/} of SBI
processes takes two SBI processes which are linked through the same
Bernoulli choice variable.} 

\begin{table}[h]
\begin{tabular}{ |c|cc|cc|}
\hline
Process & $I^{(0)}(t)$ &$ I^{(1)}(t)$ & $O^{(0)}(t)$ & $O^{(1)} (t)$ \\
$t=0 $ & 0  &1 & 1& 0 \\
Rate &\multicolumn{2}{c|}{   $(1-p)(k+\delta_1)$}  &
                                                    \multicolumn{2}{c|}{ $p(k+\delta_2)$}\\
\hline
\end{tabular} \vspace{.1in}
\caption{Ingredients for a pair of switched BI processes.}
\label{tab:sbi}
\end{table}

Suppose that $J$ is a Bernoulli switching random variable with 
\[
\PP(J=1) = p = 1-\PP(J = 0),
\]
and $\{\Ia(t):t\ge 0\}$, $\{\Ib(t):t\ge 0\}$, $\{\Oa(t):t\ge 0\}$, $\{\Ob(t):t\ge 0\}$ are four independent BI processes (also independent of $J$) with
$\Ia(0)=\Ob(0)=0$, $\Ib(0)=\Oa(0)=1$ and transition rates
\begin{align*}
q^{\Ia}_{k,k+1} = (1-p)(k+\delta_1),&\qquad q^{\Ob}_{k,k+1} = p(k+\delta_2),\quad\text{for }k\ge 0,\\
q^{\Ib}_{k,k+1} = (1-p)(k+\delta_1),&\qquad  q^{\Oa}_{k,k+1} = p(k+\delta_2),\quad\text{for }k\ge 1, \,\delta_1,\delta_2>0.
\end{align*}
\sid{See Table \ref{tab:sbi} for quick reminders.}
Then we construct a pair of SBI processes $\{\bigl(I^{(J)}(t),O^{(J)}(t)\bigr):t\ge 0\}$ using five independent ingredients:
\beqq\label{eq:def_sbi}
\bigl(I^{(J)}(t),O^{(J)}(t)\bigr) := (1-J)\bigl(\Ia(t),\Oa(t)\bigr) + J \bigl(\Ib(t),\Ob(t)\bigr),\qquad t\ge 0.
\eeqq

We then consider the convergence of the pair of SBI processes, $\bigl(e^{-(1-p)t}I^{(J)}(t),e^{-pt}O^{(J)}(t)\bigr)$, as $t\to\infty$.
Write a Gamma random variable $X$ with density
$f_X(x) = b^a x^{a-1}e^{-bx}/\Gamma(a)$, $x>0$ and $a,b>0$, as $X\sim \Gamma(a,b)$.
Then from Theorem~\ref{thm:tavare}, Remark~\ref{rmk:tavare} and \eqref{eq:def_sbi}, we have with $X^{(0)}$, $Y^{(0)}$, $X^{(1)}$, $Y^{(1)}$ being
four independent Gamma random variables and $X^{(0)}\sim \Gamma(\delta_0,1)$, $Y^{(0)}\sim \Gamma(1+\delta_1,1)$,
$X^{(1)}\sim \Gamma(1+\delta_0,1)$, $Y^{(1)}\sim \Gamma(\delta_1,1)$, as $t\to\infty$,
\beqq\label{eq:convIO}
\bigl(e^{-(1-p)t}I^{(J)}(t),e^{-pt}O^{(J)}(t)\bigr)\convas (1-J) (X^{(0)}, Y^{(0)}) + J  (X^{(1)}, Y^{(1)}) =:
(X^{(J)}, Y^{(J)}).
\eeqq
Also, $(X^{(J)}, Y^{(J)})$ has joint density
\beqq\label{eq:densIO}
f_{X^{(J)}, Y^{(J)}}(x,y) = (1-p)\,\frac{x^{\delta_0-1}e^{-x}}{\Gamma(\delta_0)}\frac{y^{\delta_1}e^{-y}}{\Gamma(1+\delta_1)}
+ p\,\frac{x^{\delta_0}e^{-x}}{\Gamma(1+\delta_0)}\frac{y^{\delta_1-1}e^{-y}}{\Gamma(\delta_1)},\quad x,y>0.
\eeqq

\section{Embedding Process.}\label{sec:embed}
In order to prove the weak convergence of the sequence of empirical measures in
\eqref{emp_meas}, we need to embed the in- and out-degree sequences $\{\bigl(\Din_v(n), \Dout_v(n)\bigr), v\in [n], n \geq 1\}$
 into a process constructed from pairs of SBI
processes, as specified in Section~\ref{sec:SBI}. The embedding idea is proposed in
 \cite{athreya:ghosh:sethuraman:2008} and has been used in \cite{wang:resnick:2017} to model two different undirected 
 linear preferential attachment models. 

\subsection{Embedding.}\label{subsec:embed}
Here we discuss how to embed the directed network growth model into 
a process constructed from an infinite sequence of SBI pairs.

\subsubsection{Directed network model and SBI processes.}\label{subsubsec:bivarBI}
The building blocks of the embedding procedure is an infinite family of independent BI processes
$$\left\{I_1(t), O_1(t), \Ia_v(t), \Ib_v(t), \Oa_v(t),\Ob_v(t): v\ge 2, t\ge 0\right\},$$
\sid{defined on the same probability space and} satisfying:
\begin{enumerate}
\item[(i)] \sid{$(I_1(0) ,O_1(0))=1$,} $(\Ia_v(0), O_v^{(0)})=(0,1) $
  and $(I_v^{(1)}(0), O_v^{(1)}(0))=(1,0), $ for each $v\ge 2$.
\item[(ii)] Any process labeled with an $I$ is a BI process with transition rates
\[
q^I_{k, k+1} = \frac{\cin}{\cin+\cout}(k+\deltain), \qquad \deltain>0,
\]
and any process labeled with an $O$ is a BI process with transition rates
\[
q^O_{k, k+1} = \frac{\cout}{\cin+\cout}(k+\deltaout)\qquad \deltaout>0.
\]
These hold for $k\ge 0$ when $v\ge 2$ and $k\ge 1$ for $I_1, O_1$.
\end{enumerate}

On $(\mathbb{N}^2)^\infty$, define
$$\sid{\bZ^{(1)}} =\{\bZ^{(1)}_t: t\ge 0\} :=\left\{ \Bigl(\bigl(I_1(t), O_1(t)\bigr),
  (0,0),\ldots\Bigr): t\ge 0\right\}$$  
and the $\sigma$-algebra
$\mathcal{F}^{(1)}_t := \sigma\left\{\bZ^{(1)}_t: 0\le s\le t\right\}$
\sid{so that $\bZ^{(1)}$ is strong Markov with respect to $\{\mathcal{F}^{(1)}_t\}.$  }
Set $T_1 = 0$
and define \sid{the stopping time $T_2$ with respect to
$\{\mathcal{F}^{(1)}_t,t\geq 0\}$ } as
\beqq\label{eq:defT2}
T_2 := \inf\left\{t\ge 0: \text{$\bZ^{(1)}_t$ jumps}\right\}.
\eeqq
Then $T_2$ is the minimum of two independent exponential r.v.'s with means 
$$\left(\frac{\cin}{\cin+\cout}(1+\deltain)\right)^{-1}\qquad\text{and}\qquad \left(\frac{\cout}{\cin+\cout}(1+\deltaout)\right)^{-1}.$$
From \eqref{eq:cees}, we have
\[
\PP[T_2>t]=e^{-(\cin+\cout)^{-1} t}, \qquad t>0.
\]
Let $J_2 := \ind_{\{\text{$O_1$ jumps first}\}} $ so that $\PP[J_2=1]=\gamma$.
Also, let $\widetilde{L}_2$ be index of the 
\sid{$(I,O)$-pair}
that jumps \sid{first} at $T_2$ \sid{
which in this case is $1$. However, note that}
$\left(\widetilde{L}_2, J_2\right)$ determines which one of $I_1$ and $O_1$ will jump at $T_2$,
and $T_2$ is independent of $\left(\widetilde{L}_2, J_2\right)$ by the
property of independent exponential r.v.'s (cf. \cite[Exercise 4.45(a)]{resnick:1992}). 
In addition, we also have $T_2, \widetilde{L}_2, J_2\in
\mathcal{F}^{(1)}_{T_2}$, that is, measurable with respect to $\mathcal{F}^{(1)}_{T_2}$.

Now use \sid{the independent quantities}
$J_2, (\Ia_2, \Oa_2), (\Ib_2,  \Ob_2)$ to define a pair of SBI
processes $(I_2,O_2)= \bigl((I^{(J_2)}_2,O^{(J_2)}_2\bigr)$ 
as in \eqref{eq:def_sbi}. 
Let $\bz_2(t) := \bigl((0,0),
(I_2^{(J_2)}(t), O_2^{(J_2)}(t)), (0,0),\ldots\bigr)$ and  
$$\sid{\bZ^{(2)}}=\{\bZ^{(2)}_t: t\ge 0\} :=\left\{\bZ^{(1)}_{t+T_2} + \bz_2(t): t\ge 0\right\}.$$
Define the $\sigma$-algebra 
$$\mathcal{F}^{(2)}_{t+T_2} := \sigma\left\{\bZ^{(2)}_s: 0\le s\le
  t\right\}\bigvee \mathcal{F}^{(1)}_{T_2},$$ 
\sid{so that $\bZ^{(2)}$ is strong Markov with respect to
  $\{\mathcal{F}^{(2)}_{t+T_2} ,t\geq 0\}$.}
Also, let 
$$\tau_3 := \inf\left\{t\ge 0: \bZ^{(2)}_t\text{ jumps}\right\},\qquad T_3:=T_2+\tau_3,$$
and $J_3 := \ind_{\left\{\text{One of $O_1(T_2+\cdot)$, $O_2^{(J_2)}(\cdot)$ jumps first}\right\}}$. 
Denote the index of the $(I,O)$-pair that jumps at $T_3$ by $\widetilde{L}_3$ and 
write $\PP^{\mathcal{F}^{(1)}_{T_2}}(\cdot):= \PP(\cdot|\mathcal{F}^{(1)}_{T_2})$,
$\PP_{\bz}(\bZ_t\in\cdot):= \PP(\bZ_t\in\cdot|\bZ_0=\bz)$.
Then by the strong Markov property, we have
\[
\PP^{\mathcal{F}^{(1)}_{T_2}}\left(\bZ^{(2)}_t\in\cdot\right)
= \PP_{\bZ^{(1)}_{T_2}+\bz_2(0)}\left(\bZ^{(1)}_t+\bz_2(t)\in\cdot\right).
\]
Therefore, with respect to $\PP^{\mathcal{F}^{(1)}_{T_2}}$, $\tau_3$ is the minimum of 4 independent exponential r.v.'s with means
$\left(\frac{\cin}{\cin+\cout}(I_1(T_2)+\deltain)\right)^{-1}$, $\left(\frac{\cout}{\cin+\cout}(O_1(T_2)+\deltaout)\right)^{-1}$,
$\left(\frac{\cin}{\cin+\cout}(J_2+\deltain)\right)^{-1}$ and $\left(\frac{\cout}{\cin+\cout}(1-J_2+\deltaout)\right)^{-1}$.
Note that $(I_1(T_2), O_1(T_2)) = (2-J_2, 1+J_2)$. 
We then have the following:
\begin{enumerate}
\item $\PP^{\mathcal{F}^{(1)}_{T_2}}(\tau_3>t) = e^{-2(\cin+\cout)^{-1} t}$, $t>0$.
\item $\PP^{\mathcal{F}^{(1)}_{T_2}}(J_3=1)=\gamma$ and $\tau_3$ is independent of $(\widetilde{L}_3, J_3)$ with respect to $\PP^{\mathcal{F}^{(1)}_{T_2}}$.
\item The random variables $T_3, \widetilde{L}_3,J_3 \in
  \mathcal{F}^{(2)}_{T_3} \sid{=\mathcal{F}^{(2)}_{\tau_3 +T_2  }  }$.
\end{enumerate}

\sid{Continue in this way to} use \sid{the conditionally independent quantities } $J_3$, $(\Ia_3, \Oa_3)$ and $( \Ib_3,
\Ob_3)$ to define a pair of SBI processes
$(I_3,O_3)=\bigl(I^{(J_3)}_3,O^{(J_3)}_3\bigr)$ 
as in \eqref{eq:def_sbi}.
In general, for $n\ge 3$, \sid{set}
\begin{align*}
\bZ_t^{(n)} :=& \left(\bigl(I_1(T_n+t), O_1(T_n+t)\bigr), \bigl(I_2^{(J_2)}(T_n-T_2+t), O^{(J_2)}_2(T_n-T_2+t)\bigr),\right.\\
&\left.\quad \ldots, \bigl(I^{(J_n)}_n(t), O^{(J_n)}_n(t)\bigr),(0,0),\ldots\right),\qquad t\ge 0,
\end{align*}
 $\mathcal{F}_{t+T_n}^{(n)}:= \sigma\left\{\bZ_s^{(n)}: 0\le
  s\le t\right\}\sid{
\bigvee } \mathcal{F}_{T_n}^{(n-1)}$,
 $\tau_{n+1}:=\inf\{t\ge 0: \bZ_t^{(n)}\text{ jumps}\}$ and $T_{n+1}:=T_n+\tau_{n+1}$.
Also, define 
\begin{itemize}
\item $J_{n+1}:= \ind_{\left\{\text{One of $O_1(T_n+\cdot)$, $O_k^{(J_k)}(T_n-T_k+\cdot)$, $k=2,\ldots,n$ jumps first}\right\}}$, and
\item $\widetilde{L}_{n+1}$ is the index of the \sid{$(I,O)$-pair}
  that jumps first \sid{among $(I_1(T_n+t),O_1(T_n+t)),
    (I_k(T_n-T_k+t,O_k(T_n-T_k+t),\,k=2,\dots,n$. }
\end{itemize}
Note that with 
$$\bz_n(t) : = \left((0,0),\ldots, \underbrace{\left(I_n^{(J_n)}(t), O_n^{(J_n)}(t)\right)}_{\text{$n$-th pair}}, (0,0),\ldots\right),$$
we have $\bZ_t^{(n)} = \bZ_{\tau_n+t}^{(n-1)} + \bz_n(t)$.
Using the strong Markov property gives
\[
\PP^{\mathcal{F}^{(n-1)}_{T_n}}\left(\bZ^{(n)}_t\in\cdot\right)
= \PP_{\bZ^{(n-1)}_{\tau_n}+\bz_n(0)}\left(\bZ^{(1)}_t+\sum_{k=2}^n\bz_k(t)\in\cdot\right) .
\]
Then with respect to $\mathcal{F}^{(n-1)}_{T_n}$, $\tau_{n+1}$ is the minimum of $2n$ independent exponential r.v.'s with means
\begin{align*}
\left(\frac{\cin}{\cin+\cout}(I_1(T_n)+\deltain)\right)^{-1},& \left(\frac{\cout}{\cin+\cout}(O_1(T_n)+\deltaout)\right)^{-1},\\
\left(\frac{\cin}{\cin+\cout}(I_k^{(J_k)}(T_n-T_k)+\deltain)\right)^{-1},&\left(\frac{\cout}{\cin+\cout}(O_k^{(J_k)}(T_n-T_k)+\deltaout)\right)^{-1},\, k=2,\ldots,n.
\end{align*}
This implies:
\begin{enumerate}
\item The random variable $\tau_{n+1}$ is independent of $(\widetilde{L}_{n+1}, J_{n+1})$ with respect to $\PP^{\mathcal{F}^{(n-1)}_{T_n}}$.
\item The random variables $T_{n+1}, \widetilde{L}_{n+1},J_{n+1} \in \mathcal{F}^{(n)}_{T_{n+1}}$.
\end{enumerate}

Set $\tau_2:= T_2$. Then from this construction follow properties of the distribution of $\{\tau_{n}\}_{n\ge 2}$ and $\{J_n\}_{n\ge 2}$.
\begin{Lemma}\label{lem:Tn}
Suppose $\{T_n\}_{n\ge 1}$, $\{\tau_n\}_{n\ge2}$ and $\{J_n\}_{n\ge 2}$ are defined as above. Then:
\begin{enumerate}
\item[(i)] 
\sid{The sequence $\{J_{n} \}$ is independent of  $\{\tau_{n} \}$.}
\item[(ii)]  \sid{The sequence} $\{J_n\}$ is a sequence of iid Bernoulli random variables with
\beqq\label{eq:J}
\PP(J_n = 1) = \gamma = 1-\PP(J_n=0),\qquad n \ge 2.
\eeqq
\item[(iii)] \sid{The sequence} $\{\tau_n\}_{n\ge 2}$ satisfies
\begin{align}
\{\tau_{n+1}: n\ge 1\} &\stackrel{d}{=} \left\{\frac{E_n}{(\cin+\cout)^{-1}n}, n\ge 1\right\},\label{eq:N}
\end{align}
where $\{E_n: n\ge 1\}$ is a sequence of iid unit exponential random
variables. \sid{So $\{T_n\}$ are the birth times of a linear birth
  process with birth rate $(\cin+\cout)^{-1}$.}
\end{enumerate}
\end{Lemma}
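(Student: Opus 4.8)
The plan is to prove (i)--(iii) simultaneously by an induction on $n$ that peels off one pair $(\tau_{n+1},J_{n+1})$ at a time; the engine is the observation that, conditionally on the past $\sigma$-algebra $\mathcal{F}^{(n-1)}_{T_n}$, the competing-alarm-clock description already set up in the construction produces a \emph{deterministic} total rate equal to $(\cin+\cout)^{-1}n$.

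\textbf{Step 1 (conservation law for the embedded process).} By construction $\bZ^{(n)}_0=\bZ^{(n-1)}_{\tau_n}+\bz_n(0)$, so at the first jump time $\tau_n$ of $\bZ^{(n-1)}$ exactly one coordinate of $\bZ^{(n-1)}$ increases by $1$ --- an ``in'' coordinate when $J_n=0$ and an ``out'' coordinate when $J_n=1$ --- while the new $n$-th pair is initialized at $\bigl(I_n^{(J_n)}(0),O_n^{(J_n)}(0)\bigr)=(J_n,1-J_n)$. Thus, whatever value $J_n$ takes, passing from $\bZ^{(n-1)}$ just after $\tau_n$ to $\bZ^{(n)}_0$ raises the sum of all in-coordinates by exactly $1$ and the sum of all out-coordinates by exactly $1$. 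Since $\bZ^{(1)}_0$ has one active pair with both coordinate sums equal to $1$, induction gives that $\bZ^{(n)}_0$ has exactly $n$ active pairs with in-coordinate sum $n$ and out-coordinate sum $n$, the embedded analogue of \eqref{eq:sum}.

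\textbf{Step 2 (conditional law of $(\tau_{n+1},J_{n+1})$).} The construction records that, under $\PP^{\mathcal{F}^{(n-1)}_{T_n}}$, $\tau_{n+1}$ is the minimum of $2n$ independent exponential clocks: $n$ ``in'' clocks with rates $\tfrac{\cin}{\cin+\cout}(\cdot+\deltain)$ tied to the current in-coordinates and $n$ ``out'' clocks with rates $\tfrac{\cout}{\cin+\cout}(\cdot+\deltaout)$ tied to the current out-coordinates. By Step 1 and \eqref{eq:cees}, the in-clock rates sum to $\tfrac{\cin}{\cin+\cout}(n+n\deltain)=\tfrac{\alpha n}{\cin+\cout}$, the out-clock rates sum to $\tfrac{\gamma n}{\cin+\cout}$, and the grand total is $\tfrac{(\alpha+\gamma)n}{\cin+\cout}=\tfrac{n}{\cin+\cout}$. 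Hence $\PP^{\mathcal{F}^{(n-1)}_{T_n}}(\tau_{n+1}>t)=e^{-(\cin+\cout)^{-1}nt}$; and since $J_{n+1}=\ind_{\{\text{an out-clock wins}\}}$, the standard property of competing independent exponentials (\cite[Exercise 4.45(a)]{resnick:1992}) gives $\PP^{\mathcal{F}^{(n-1)}_{T_n}}(J_{n+1}=1)=\gamma$ together with conditional independence of $\tau_{n+1}$ and $(\widetilde{L}_{n+1},J_{n+1})$. The crucial point is that none of these conditional laws depends on $\mathcal{F}^{(n-1)}_{T_n}$.

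\textbf{Step 3 (assembling independence).} Since $\sigma(\tau_2,\dots,\tau_n,J_2,\dots,J_n)\subseteq\mathcal{F}^{(n-1)}_{T_n}$ and, by Step 2, the conditional law of $(\tau_{n+1},J_{n+1})$ given $\mathcal{F}^{(n-1)}_{T_n}$ is the fixed product law $\mathrm{Exp}\big((\cin+\cout)^{-1}n\big)\otimes\mathrm{Bernoulli}(\gamma)$, a routine induction (e.g., successive conditioning via joint Laplace transforms) shows that $\{(\tau_{n+1},J_{n+1}):n\ge1\}$ are mutually independent with those marginals. From this one reads off (ii) that $\{J_n\}_{n\ge2}$ is iid $\mathrm{Bernoulli}(\gamma)$, (i) that $\sigma(\{J_n\})$ and $\sigma(\{\tau_n\})$ are independent, and, putting $E_n:=(\cin+\cout)^{-1}n\,\tau_{n+1}$, that $\{E_n\}$ is iid unit exponential, which is exactly \eqref{eq:N}. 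Finally, since $T_1=0$ and $T_{n+1}=\sum_{k=2}^{n+1}\tau_k$ with the $\tau_k$ independent and $\tau_k$ exponential of rate $(\cin+\cout)^{-1}(k-1)$, the $\{T_n\}_{n\ge1}$ are the successive birth times of a linear birth process with per-capita birth rate $(\cin+\cout)^{-1}$. The main obstacle is Step 1: verifying carefully that at each $T_n$ precisely one existing coordinate jumps and the newly initialized pair carries in- and out-degree exactly one apiece, so that the aggregate clock rate collapses to the deterministic value $(\cin+\cout)^{-1}n$ regardless of the entire past configuration; once this is in place, Steps 2 and 3 are routine manipulations with competing exponentials and a standard conditioning argument.
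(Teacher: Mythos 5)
Your proposal is correct and follows essentially the same route as the paper: the conservation law of Step 1 is the paper's identity \eqref{eq:sum_inout}, the conditional computations of Step 2 are exactly \eqref{eq:cond_tau}--\eqref{eq:cond_J} (including the algebra showing the in- and out-rates sum to $\alpha n/(\cin+\cout)$ and $\gamma n/(\cin+\cout)$), and the peeling/successive-conditioning argument of Step 3 is the recursion \eqref{eq:JTjoint}. No substantive differences.
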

\begin{proof}
For brevity of notation, write
$\lambda^{I_1}_{n} = \frac{\cin}{\cin+\cout}(I_1(T_n)+\deltain)$, $\lambda^{O_1}_{n} = \frac{\cout}{\cin+\cout}(O_1(T_n)+\deltaout)$ and 
for $2\le k\le n, n\ge 2$,
\begin{align*}
\lambda^{I_k}_{n} &= \frac{\cin}{\cin+\cout}(I^{(J_k)}_k(T_n-T_k)+\deltain), \\
\lambda^{O_k}_{n} &= \frac{\cout}{\cin+\cout}(O^{(J_k)}_k(T_n-T_k)+\deltaout).
\end{align*}

At each $T_n$, $n\ge 2$, we start a new
pair of SBI processes $(I_n(\cdot), O_n(\cdot))$ with initial value $(J_n,1-J_n)$ and  
one of $(I_k(\cdot), O_k(\cdot))$, $1\le k\le n-1$ increases by $(1-J_n, J_n)$. 
This corresponds in the network, for instance if $J_n=1$, to one of 
the existing $n-1$ nodes having an out-degree
increase by 1 and a new node $n$ with in-degree 1 and out-degree 0.
Therefore  \sid{(cf. \eqref{eq:sum})},
\begin{align}\label{eq:sum_inout}
I_1(T_n)+\sum_{k=2}^n  I_k^{(J_k)}(T_n-T_k)
 = 
O_1(T_n)+ \sum_{k=2}^n O_k^{(J_k)}(T_n-T_k)  = n.
\end{align} 
Hence,  for $n\ge 2$, \sid{$t_l>0$ and $j_l \in \{0,1\}$ for $l=2,\dots,n+1$,}
\begin{align}\label{eq:JTjoint}
\PP\left(\bigcap_{l=2}^{n+1} [
\tau_{l}>t_{l}, J_{l}=j_{l} ]\right)
&= \EE\left[
\PP^{\mathcal{F}^{(n-1)}_{T_n}}\left(\tau_{n+1}>t_{n+1}, J_{n+1}=j_{n+1}, 
\bigcap_{l=2}^{n} \{\tau_{l}>t_{l}, J_{l}=j_{l}\right)\right]\notag\\
&= \EE\left[\ind_{\bigcap_{l=2}^{n} \{
\tau_{l}>t_{l}, J_{l}=j_{l} \}}
\PP^{\mathcal{F}^{(n-1)}_{T_n}}\left(\tau_{n+1}>t_{n+1}, J_{n+1}=j_{n+1}\right)\right],
\end{align}
since $(\tau_l, J_l,l=2,\dots,n )\in \mathcal{F}^{(n-1)}_{T_n}$. Also, we know that with respect to $\PP^{\mathcal{F}^{(n-1)}_{T_n}}$, $\tau_{n+1}$ is the 
minimum of $2n$ independent exponential r.v.'s and $J_{n+1}$ is independent of $\tau_{n+1}$. Therefore,
\begin{align}\label{eq:cond_tauJ}
\PP^{\mathcal{F}^{(n-1)}_{T_n}}\left(\tau_{n+1}>t_{n+1}, J_{n+1}=j_{n+1}\right)
&=
  \PP^{\mathcal{F}^{(n-1)}_{T_n}}\left(\tau_{n+1}>t_{n+1}\right)\PP^{\mathcal{F}^{(n-1)}_{T_n}}\left(J_{n+1}=j_{n+1}\right). 
\end{align}
Note that 
\begin{align}\label{eq:cond_tau}
 \PP^{\mathcal{F}^{(n-1)}_{T_n}}\left(\tau_{n+1}>t_{n+1}\right) &=  \exp\left\{-t_{n+1} \sum_{k=1}^n \left(\lambda^{I_k}_n + \lambda^{O_k}_n\right)\right\}\notag\\
 &= \exp\left\{-t_{n+1}(\cin+\cout)^{-1}n\right\},
\end{align}
and assuming $j_{n+1}=1$, we have
\begin{align}\label{eq:cond_J}
\PP^{\mathcal{F}^{(n-1)}_{T_n}}\left(J_{n+1}=1\right)
&=  \frac{\sum_{k=1}^n\lambda^{O_k}_n}{\sum_{k=1}^n (\lambda^{I_k}_n+\lambda^{O_k}_n)}= \gamma. 
\end{align}
\sid{So \eqref{eq:JTjoint} becomes (continuing to suppose $j_{n+1}=1$),
$$
\PP\left(\bigcap_{l=2}^{n+1} [
\tau_{l}>t_{l}, J_{l}=j_{l} ]\right)
=\gamma \exp\left\{-t_{n+1}(\cin+\cout)^{-1}n\right\}
\PP\left(\bigcap_{l=2}^{n} [
\tau_{l}>t_{l}, J_{l}=j_{l} ]\right).
$$
If $j_{n+1}=0$, $\gamma $ is replaced by $\alpha$ on the right
side. This is sufficient for the proof of the Lemma.}
\end{proof}

\subsubsection{Embedding.}
The following embedding theorem is similar to those proved in
\cite{athreya:ghosh:sethuraman:2008, wang:resnick:2017} and summarizes how to embed in
the paired SBI process constructions.
\begin{Theorem}\label{thm:embed} 
Suppose that $\{T_n\}_{n\ge 1}$ and $\{\bZ^{(n)}_t: t\ge 0\}$ are as defined in Section~\ref{subsubsec:bivarBI}.
Then in
  $((\mathbb{N}^2)^\infty){}^\infty$,
$$\left\{ \bfD(n), n\geq 1\right\} \stackrel{d}{=} \left\{\bZ^{(n)}_0, n\geq 1\right\}.$$
\end{Theorem}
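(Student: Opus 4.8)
The plan is to show that $\{\bfD(n),n\ge 1\}$ and $\{\bZ^{(n)}_0,n\ge 1\}$ are $(\mathbb{N}^2)^\infty$-valued Markov chains sharing the same deterministic initial state and the same one-step transition kernel; since a discrete-time Markov chain is determined in law by its initial distribution and transition kernel, this yields the asserted equality in $((\mathbb{N}^2)^\infty){}^\infty$. Equivalently, one may induct on $n$, matching the laws of $(\bfD(1),\dots,\bfD(n))$ and $(\bZ^{(1)}_0,\dots,\bZ^{(n)}_0)$, the inductive step being exactly the kernel comparison below.

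First I would dispatch the easy inputs. The initial states agree: $\bfD(1)=((1,1),(0,0),\dots)$ by \eqref{eq:init}, while $\bZ^{(1)}_0=((I_1(0),O_1(0)),(0,0),\dots)=((1,1),(0,0),\dots)$. That $\{\bfD(n)\}$ is Markov with transition probabilities \eqref{eq:def_Lin}--\eqref{eq:def_Lout} is already recorded in Section~\ref{subsub:construct}. For $\{\bZ^{(n)}_0\}$ I would read the Markov property off the construction in Section~\ref{subsubsec:bivarBI}: conditionally on $\mathcal{F}^{(n-1)}_{T_n}$, the strong Markov property makes $\bZ^{(n)}_{\cdot}$ a time-homogeneous continuous-time Markov chain started from $\bZ^{(n)}_0$, so the conditional law of its first jump --- hence that of $(\tau_{n+1},\widetilde{L}_{n+1},J_{n+1})$, hence of $\bZ^{(n+1)}_0=\bZ^{(n)}_{\tau_{n+1}}+\bz_{n+1}(0)$ --- depends on the past $\mathcal{F}^{(n)}_{T_{n+1}}$ only through $\bZ^{(n)}_0$.

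The substance is the kernel comparison. Fix a state $\bd$ reachable at stage $n$; by \eqref{eq:sum} and \eqref{eq:sum_inout} its coordinates satisfy $\sum_k d^k_{\mathrm{in}}=\sum_k d^k_{\mathrm{out}}=n$. Two things need checking. First, the \emph{increment structure}: since each coordinate BI process jumps by unit steps and a.s.\ no two of the $2n$ independent exponential clocks driving $\bZ^{(n)}$ fire simultaneously, $\bZ^{(n)}_{\tau_{n+1}}-\bZ^{(n)}_0$ equals $\be^{\mathrm{in}}_{\widetilde{L}_{n+1}}$ when an $I$-clock wins (so $J_{n+1}=0$) and $\be^{\mathrm{out}}_{\widetilde{L}_{n+1}}$ when an $O$-clock wins (so $J_{n+1}=1$); adding $\bz_{n+1}(0)=(J_{n+1},1-J_{n+1})$ in coordinate $n+1$ yields
\[
\bZ^{(n+1)}_0=\bZ^{(n)}_0+(1-J_{n+1})\be^{\mathrm{in}}_{\widetilde{L}_{n+1}}+J_{n+1}\be^{\mathrm{out}}_{\widetilde{L}_{n+1}}+J_{n+1}\be^{\mathrm{in}}_{n+1}+(1-J_{n+1})\be^{\mathrm{out}}_{n+1},
\]
which is exactly the update rule \eqref{eq:recursive} under $B_{n+1}\leftrightarrow J_{n+1}$ together with $\widetilde{L}_{n+1}\leftrightarrow L_{n+1}$ when $J_{n+1}=0$ and $\widetilde{L}_{n+1}\leftrightarrow L_{n+1}-n$ when $J_{n+1}=1$. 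Second, the \emph{jump probabilities}: conditionally on $\bZ^{(n)}_0=\bd$, the $I$- and $O$-clocks of pair $v$ have rates $\tfrac{\cin}{\cin+\cout}(d^v_{\mathrm{in}}+\deltain)$ and $\tfrac{\cout}{\cin+\cout}(d^v_{\mathrm{out}}+\deltaout)$; using $\cin(1+\deltain)=\alpha$, $\cout(1+\deltaout)=\gamma$ and the sum constraint, the total rate is $n/(\cin+\cout)$, so the elementary rule for minima of independent exponentials gives $\PP(\widetilde{L}_{n+1}=v,\,J_{n+1}=0\mid\bZ^{(n)}_0=\bd)=\alpha(d^v_{\mathrm{in}}+\deltain)/((1+\deltain)n)$ and symmetrically $\PP(\widetilde{L}_{n+1}=v,\,J_{n+1}=1\mid\bZ^{(n)}_0=\bd)=\gamma(d^v_{\mathrm{out}}+\deltaout)/((1+\deltaout)n)$. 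Combined with the increment identity, these show the transition kernel of $\{\bZ^{(n)}_0\}$ coincides with \eqref{eq:def_Lin}--\eqref{eq:def_Lout}, finishing the argument.

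I expect the real work to be bookkeeping rather than anything probabilistically deep: confirming that passing from $\bZ^{(n)}_0$ to $\bZ^{(n+1)}_0$ adds precisely one unit to one existing pair and then records the initial value of the freshly started SBI pair, and keeping the index translation between $\widetilde{L}_{n+1}$ and $L_{n+1}$ consistent with \eqref{eq:recursive}. Once the strong Markov property of the SBI construction is invoked, the remaining calculation is the competing-exponential-clocks computation already carried out for \eqref{eq:cond_tau}--\eqref{eq:cond_J}, and the overall scheme mirrors the embedding arguments in \cite{athreya:ghosh:sethuraman:2008,wang:resnick:2017}.
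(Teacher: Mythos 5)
Your proposal is correct and follows essentially the same route as the paper's proof: match the initial states, observe that both $\{\bfD(n)\}$ and $\{\bZ^{(n)}_0\}$ are Markov chains on $(\mathbb{N}^2)^\infty$, and verify via the competing-exponential-clocks calculation (using $\cin(1+\deltain)=\alpha$, $\cout(1+\deltaout)=\gamma$ and the sum constraint \eqref{eq:sum_inout}) that the one-step transition kernels coincide with \eqref{eq:def_Lin}--\eqref{eq:def_Lout}. The increment bookkeeping you carry out, identifying $B_{n+1}\leftrightarrow J_{n+1}$ and translating $\widetilde{L}_{n+1}$ to $L_{n+1}$ or $L_{n+1}-n$, is exactly the content of the paper's identity $\bZ^{(n+1)}_0=\bZ^{(n)}_0+\widetilde{\bd}_{\widetilde{L}_{n+1}}^{(J_{n+1})}$.
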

\begin{proof}
The proof relies on both $\{\bfD(n), n\ge 1\}$ and $\{\bZ^{(n)}_0, n\ge 1\}$ being Markov chains with the same transition probabilities.
It is similar to that of
\cite[Theorem~2.1]{athreya:ghosh:sethuraman:2008} and \cite[Theorem 2]{wang:resnick:2017} which we now outline.

Define
\[
\widetilde{\bd}_j^{(J_n)} := \left((0,0),\ldots, \underbrace{(1-J_n,J_n)}_{\text{$j$-th pair}}, (0,0),\ldots, (0,0),\underbrace{(J_n, 1-J_n)}_{\text{$n$-th pair}}, (0,0),\ldots\right)
\]
Recall that $\widetilde{L}_{n+1}$ is the index of the 
\sid{$(I,O)$-pair} 
that jumps at $T_{n+1}$. Then we have
\begin{align}\label{eq:bfDn}
\bZ^{(n+1)}_0=&\bZ^{(n)}_0+\widetilde{\bd}_{\widetilde{L}_{n+1}}^{(J_{n+1})}.
\end{align}
This expresses  $\bZ^{(n+1)}_0$ as a function of
  $\mathcal{F}^{(n-1)}_{T_n}$-measurable random elements and random
  elements independent of $\mathcal{F}^{(n-1)}_{T_n}$, namely:
\begin{enumerate}
\item $\bZ^{(n)}_0 \in
\mathcal{F}^{(n-1)}_{T_n}$;
\item $J_{n+1}$ which is independent of $\mathcal{F}^{(n-1)}_{T_n}$
(by Lemma~\ref{lem:Tn}; see \eqref{eq:cond_J});
\item  $\widetilde{L}_{n+1}$ which is a
function of $(\lambda^{I_k}_n+\lambda^{O_k}_n, k=2,\dots,n) \in
\mathcal{F}^{(n-1)}_{T_n} $ and conditionally on
$\mathcal{F}^{(n-1)}_{T_n}$, $2n$ i.i.d exponential r.v.s which are
independent of $\mathcal{F}^{(n-1)}_{T_n} $.
\end{enumerate}
Hence, both $\{ \bfD(n),n\geq 1\}$ and $\{\bZ^{(n)}_0, n \geq 1\}$
are Markov on the state space $(\mathbb{N}^2)^\infty$.

When $n =1$, 
\begin{align*}
\bZ^{(1)}_0 &= \Bigl(\big(I_1(0), O_1(0)\bigr),(0,0),\ldots\Bigr) = \bigl((1,1), (0,0),\ldots\bigr) \\
&= \Bigl(\bigl(\Din_1(1),\Dout_1(1)\bigr),(0,0),\ldots\Bigr) = \bfD(1),
\end{align*}
so to prove equality in distribution for any $n$,
it suffices to verify that the transition probability from
$\bZ^{(n)}_0$ to $\bZ^{(n+1)}_0$ is the same as that
from 
${\bfD}(n)$ to ${\bfD}(n+1)$ \sid{which is given in \eqref{eq:def_Lin} and 
\eqref{eq:def_Lout}.} 
In the SBI setup, applying Lemma~\ref{lem:Tn} gives
for any $2\le v \le n$,
\begin{align*}
{\PP}^{\mathcal{F}^{(n-1)}_{T_n} }\Big(\bZ^{(n+1)}_0 =&\bZ^{(n)}_0
  +\be^\text{in}_v +\be^\text{out}_{n+1})\Big) 
={\PP}^{\mathcal{F}^{(n-1)}_{T_n} }\Bigl(J_{n+1}=0,\widetilde{L}_{n+1} =v\Bigr)\\
=& \frac{\frac{\cin}{\cin+\cout}(I^{(J_v)}_v(T_n-T_v) + \deltain)}{(\cin+\cout)^{-1}n} 
= \alpha\frac{I^{(J_v)}_v(T_n-T_v)     +\deltain}{(1+\deltain)n},\\
{\PP}^{\mathcal{F}^{(n-1)}_{T_n} }\Big(\bZ^{(n+1)}_0 =&\bZ^{(n)}_0
  +\be^\text{in}_{n+1} +\be^\text{out}_{v})\Big)
={\PP}^{\mathcal{F}^{(n-1)}_{T_n} }\Bigl(J_{n+1}=1,\widetilde{L}_{n+1} =v\Bigr)\\
=& \frac{\frac{\cout}{\cin+\cout}(O^{(J_v)}_v(T_n-T_v) + \deltaout)}{(\cin+\cout)^{-1}n} 
= \gamma\frac{ O^{(J_v)}_v(T_n-T_v) +\deltaout}{(1+\deltaout)n}.\end{align*}
For $2\leq v \leq n$, this agrees with the transition probabilities in \eqref{eq:def_Lin}
and
 \eqref{eq:def_Lout} respectively; the case for $v=1$ is similar.
\end{proof}

\subsection{Asymptotic properties.}\label{subsec:asy}
With the embedding technique specified in Section~\ref{subsec:embed},
the asymptotic behavior of the in- and out-degree growth in a
preferential attachment model can be characterized explicitly. 
These asymptotic properties then help us derive weak convergence of
the empirical measure. 
For brevity of notation, we will write $I^{(J_v)}_v$, $O^{(J_v)}_v$ as $I_v$, $O_v$, $v\ge 2$, in the rest of this paper.

\subsubsection{Convergence of the in- and out-degrees for a fixed node.}
We first consider the asymptotic behavior of the in- and out-degrees for a fixed node, i.e. $(\Din_v(n), \Dout_v(n))$ for a fixed $v$.
To do this, we make use of the embedding results in Theorem~\ref{thm:embed}, which translates the convergence of the degrees to the setting of
$\bigl\{\bigl(I_v(t-T_v),O_v(t-T_v)\bigr): t\ge T_v\bigr\}_{1\le v\le n}$. Results are summarized in Theorem~\ref{thm:fixed_pair}.
\begin{Theorem}\label{thm:fixed_pair}
Suppose that $\{T_n:n\ge 1\}$ and $\{J_n: n\ge 2\}$ are as defined in Section~\ref{subsubsec:bivarBI}. Then:
\begin{enumerate}
\item[(i)] The birth times $\{T_n\}_{n\ge 1}$ satisfy that as $n\to\infty$,
\begin{align}\label{eq:conv_Tn}
n\cdot e^{-(\cin+\cout)^{-1}T_n}\convas W \qquad\text{and}\qquad W\sim \text{Exp}(1).
\end{align}
\item[(ii)] Let $(\sigma^{\text{in}}_1, \sigma^{\text{out}}_1)$ be a pair of independent Gamma random variables with densities
\[
f_{\sigma^{\text{in}}_1}(x) = \frac{x^{\deltain} e^{-x}}{\Gamma(1+\deltain)}\quad \text{and}\quad 
f_{\sigma^{\text{out}}_1}(x) = \frac{x^{\deltaout} e^{-x}}{\Gamma(1+\deltaout)},\, x>0, \text{ respectively,}
\]
and for each $v\ge 2$,
$\bigl(\sigma^{\text{in}}_v,\sigma^{\text{out}}_v\bigr)$ have joint density
\begin{align}\label{eq:joint_dens}
f_{\bigl(\sigma^{\text{in}}_v,\sigma^{\text{out}}_v\bigr)}(x, y) 
&= \alpha \frac{x^{\deltain-1} e^{-x}}{\Gamma(\deltain)} \frac{y^{\deltaout} e^{-y}}{\Gamma(1+\deltaout)}
+\gamma\frac{x^{\deltain} e^{-x}}{\Gamma(1+\deltain)} \frac{y^{\deltaout-1} e^{-y}}{\Gamma(\deltaout)},\quad x,y>0.
\end{align}
Then for a fixed $v\ge 1$, we have, with $W$ defined as in \eqref{eq:conv_Tn},
\begin{align}
\left(\frac{\Din_v(n)}{n^{\cin}},\, \frac{\Dout_v(n)}{n^{\cout}}\right)\, &\convw \, 
\left(\frac{\sigma^{\text{in}}_v e^{-\frac{\cin}{\cin+\cout} T_v}}{W^{\cin}},\, \frac{\sigma^{\text{out}}_v e^{-\frac{\cout}{\cin+\cout}  T_v}}{W^{\cout}}\right)\quad n\to\infty.\label{eq:conv_pair}
\intertext{Also, setting $\Din_v(n) = 0 = \Dout_v(n)$ for all $v\ge n+1$, we get as $n\to\infty$,}
\left(\max_{v\ge 1}\frac{\Din_v(n)}{n^{\cin}},\, \max_{v\ge 1}\frac{\Dout_v(n)}{n^{\cout}}\right)\, &\convw \, 
\left(\max_{v\ge 1}\frac{\sigma^{\text{in}}_v e^{-\frac{\cin}{\cin+\cout}  T_v}}{W^{\cin}},\, \max_{v\ge 1}\frac{\sigma^{\text{out}}_v 
e^{-\frac{\cout}{\cin+\cout} T_v}}{W^{\cout}}\right).
\label{eq:conv_pair_max}
\end{align}
Here $T_v$, 
$(\sigma^{\text{in}}_v, \sigma^{\text{out}}_v)$ \sid{and $W$ are independent} for all $v\ge 2$.
\end{enumerate}
\end{Theorem}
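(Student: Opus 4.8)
The plan is to deduce part~(i) from Lemma~\ref{lem:Tn}(iii) together with the classical Yule limit theorem, and to obtain part~(ii) by carrying everything over to the embedding of Theorem~\ref{thm:embed}, where the in- and out-degrees become evaluations of birth--immigration processes at the random times $T_n-T_v$; after that Theorem~\ref{thm:tavare}, Remark~\ref{rmk:tavare} and part~(i) do the work.

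\emph{Part (i).} By Lemma~\ref{lem:Tn}(iii) the $T_n$ are the successive birth times of a linear birth (Yule) process $Y$ with $Y(0)=1$ and rate $\lambda:=(\cin+\cout)^{-1}$, so $Y(T_n)=n$. The classical limit gives $e^{-\lambda t}Y(t)\convas W$ with $W\sim\mathrm{Exp}(1)$, and evaluating along $t=T_n$ yields $n\,e^{-\lambda T_n}=Y(T_n)e^{-\lambda T_n}\convas W$, which is \eqref{eq:conv_Tn}. (Equivalently, from \eqref{eq:N} one has $\log n-\lambda T_n\eqd(\log n-\sum_{j=1}^{n-1}1/j)-\sum_{j=1}^{n-1}(E_j-1)/j$, whose first term converges to a constant and whose series converges a.s.\ by summable variances, and a direct computation identifies the unit exponential law.) In particular $0<W<\infty$ a.s., hence $0<\inf_n n\,e^{-\lambda T_n}\le\sup_n n\,e^{-\lambda T_n}<\infty$ a.s.

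\emph{Part (ii), a fixed node.} Fix $v$. Theorem~\ref{thm:embed} gives, jointly in $n$, $(\Din_v(n),\Dout_v(n))\eqd(I_v(T_n-T_v),O_v(T_n-T_v))$, where $I_v$ is a BI process with rate $\lambda_I(k+\deltain)$, $\lambda_I:=\cin/(\cin+\cout)$, started from $J_v$, and $O_v$ a BI process with rate $\lambda_O(k+\deltaout)$, $\lambda_O:=\cout/(\cin+\cout)$, started from $1-J_v$ (for $v\ge2$; for $v=1$ set $T_1=0$ and start $I_1,O_1$ from $1$). Since $T_n\uparrow\infty$ a.s.\ and $T_v$ is fixed, $s:=T_n-T_v\to\infty$ a.s., so Theorem~\ref{thm:tavare} and Remark~\ref{rmk:tavare} give $e^{-\lambda_I s}I_v(s)\convas\sigma^{\text{in}}_v$ and $e^{-\lambda_O s}O_v(s)\convas\sigma^{\text{out}}_v$; conditioning on $J_v$ and using $\PP(J_v=0)=\alpha$, $\PP(J_v=1)=\gamma$ from Lemma~\ref{lem:Tn}(ii) identifies the limit law as \eqref{eq:joint_dens} (and for $v=1$ as independent $\Gamma(1+\deltain,1)$ and $\Gamma(1+\deltaout,1)$). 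Writing
\[
\frac{I_v(T_n-T_v)}{n^{\cin}}=\bigl(e^{-\lambda_I(T_n-T_v)}I_v(T_n-T_v)\bigr)\cdot e^{-\lambda_I T_v}\cdot\bigl(n\,e^{-\lambda T_n}\bigr)^{-\cin}
\]
and invoking part~(i), the right side converges a.s.\ to $\sigma^{\text{in}}_v e^{-\frac{\cin}{\cin+\cout}T_v}/W^{\cin}$; the out-coordinate is symmetric, and joint a.s.\ convergence of the embedded pair transfers, by the equality in distribution of Theorem~\ref{thm:embed}, to the weak convergence \eqref{eq:conv_pair}. The independence statement is bookkeeping on the construction: $(\sigma^{\text{in}}_v,\sigma^{\text{out}}_v)$ is a function of $J_v$ and of the fresh processes $(\Ia_v,\Oa_v),(\Ib_v,\Ob_v)$; $J_v$ is independent of $\{\tau_n\}$ by Lemma~\ref{lem:Tn}(i), and the fresh processes are independent of all the data generating the birth times, so $(\sigma^{\text{in}}_v,\sigma^{\text{out}}_v)$ is independent of $\{T_n\}_{n\ge1}$, in particular of $T_v$ and of $W$.

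\emph{Part (ii), the maxima.} Passing again to the embedding, the left side of \eqref{eq:conv_pair_max} equals in law $\bigl(\max_{1\le v\le n}I_v(T_n-T_v)/n^{\cin},\,\max_{1\le v\le n}O_v(T_n-T_v)/n^{\cout}\bigr)$, and I would run a truncate-and-tail argument on the $I$-coordinate (the $O$-coordinate being identical, and the pair handled jointly since all the limits below are a.s.). For each fixed $m$, the single-node result gives $\max_{1\le v\le m}I_v(T_n-T_v)/n^{\cin}\convas\max_{1\le v\le m}\sigma^{\text{in}}_v e^{-\frac{\cin}{\cin+\cout}T_v}/W^{\cin}$, which increases in $m$ to $\max_{v\ge1}\sigma^{\text{in}}_v e^{-\frac{\cin}{\cin+\cout}T_v}/W^{\cin}$; this is a.s.\ finite because part~(i) forces $e^{-\frac{\cin}{\cin+\cout}T_v}=O(v^{-\cin})$ a.s., while a first Borel--Cantelli step using the Gamma tail of $\sigma^{\text{in}}_v$ (whose law does not depend on $v$) gives $\sigma^{\text{in}}_v=O(v^{\cin/2})$ a.s., so the terms tend to $0$. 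The crux, and the main obstacle, is to control $\max_{m<v\le n}I_v(T_n-T_v)/n^{\cin}$ uniformly in $n$ so that it vanishes as $m\to\infty$. For this, note $M_v(s):=e^{-\lambda_I s}(I_v(s)+\deltain)$ is a nonnegative martingale with $M_v(\infty)=\sigma^{\text{in}}_v$; standard moment bounds for BI processes make it $L^p$-bounded for every $p$, so Doob's $L^p$ maximal inequality gives $\EE\bigl[(\sup_{s\ge0}M_v(s))^p\bigr]\le C_p\,\EE[(\sigma^{\text{in}}_v)^p]$, a constant free of $v$, and a further Borel--Cantelli step with $p>2/\cin$ yields $\sup_{s\ge0}M_v(s)\le v^{\cin/2}$ for all large $v$, a.s. Hence $I_v(T_n-T_v)\le v^{\cin/2}e^{\lambda_I(T_n-T_v)}$, so
\[
\frac{I_v(T_n-T_v)}{n^{\cin}}\le v^{\cin/2}\,e^{-\lambda_I T_v}\bigl(n\,e^{-\lambda T_n}\bigr)^{-\cin}\le C_\omega\,v^{\cin/2}\,e^{-\lambda_I T_v},
\]
with $C_\omega:=\sup_n(n\,e^{-\lambda T_n})^{-\cin}<\infty$ a.s.\ by part~(i); together with $e^{-\lambda_I T_v}=O(v^{-\cin})$ a.s., this bounds the tail maximum by a constant times $\sup_{v>m}v^{-\cin/2}\to0$, uniformly in $n\ge m$. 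Sandwiching $\max_{1\le v\le n}(\cdot)$ between $\max_{1\le v\le m}(\cdot)$ and $\max_{1\le v\le m}(\cdot)\vee\max_{m<v\le n}(\cdot)$ and letting $n\to\infty$ then $m\to\infty$ gives a.s.\ convergence of the embedded maxima, jointly in both coordinates, to the right side of \eqref{eq:conv_pair_max}, which then transfers to \eqref{eq:conv_pair_max} by equality in distribution. Everything outside this uniform tail bound is assembling the pieces from Theorems~\ref{thm:tavare} and~\ref{thm:embed}, Remark~\ref{rmk:tavare} and Lemma~\ref{lem:Tn}.
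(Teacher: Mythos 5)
Your proposal is correct and follows essentially the same route as the paper: part (i) via the Yule-process limit applied to the birth times from Lemma~\ref{lem:Tn}, part (ii) by transferring to the embedding of Theorem~\ref{thm:embed}, factoring $I_v(T_n-T_v)/n^{\cin}$ into the Tavar\'e limit $e^{-\lambda_I(T_n-T_v)}I_v(T_n-T_v)\to\sigma^{\text{in}}_v$, the constant $e^{-\lambda_I T_v}$, and $(ne^{-\lambda T_n})^{-\cin}\to W^{-\cin}$. The only substantive difference is in the maxima: where the paper invokes the deterministic double-array Lemma~\ref{lemma:max} and cites \cite[Proposition 2.6]{athreya:ghosh:sethuraman:2008} for moments of $\widetilde I_v=\sup_t e^{-\lambda_I t}I_v(t)$, you derive the same uniform control self-containedly by observing that $e^{-\lambda_I s}(I_v(s)+\deltain)$ is an $L^p$-bounded martingale and applying Doob's maximal inequality plus Borel--Cantelli, then run an explicit truncate-and-tail sandwich; this is an equivalent (and arguably more transparent) way to verify the key condition that the tail terms vanish uniformly in $n$.
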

\tw{
\begin{Remark}
{\rm According to the embedding results in Theorem~\ref{thm:embed}, \eqref{eq:conv_pair} 
also implies that there exists random variables $D^{(1)}_v$, $D^{(2)}_v$, $v\ge1$, on the space of $(\Din_v(n), \Dout_v(n))_{v\ge 1}$ 
satisfying $D^{(1)}_v \stackrel{d}{=} W^{-\cin}\sigma^{\text{in}}_v e^{-\frac{\cin}{\cin+\cout} T_v}$
and $D^{(2)}_v \stackrel{d}{=} W^{-\cout}\sigma^{\text{out}}_v e^{-\frac{\cout}{\cin+\cout} T_v}$, $v\ge 1$,
such that as $n\to\infty$,
\[
\left(\frac{\Din_v(n)}{n^{\cin}},\, \frac{\Dout_v(n)}{n^{\cout}}\right)\, \convas \, \left(D^{(1)}_v,D^{(2)}_v\right).
\]
}
\end{Remark}}
\begin{proof}
(i) From Lemma~\ref{lem:Tn}(i),  $\{T_n:n\ge 1\}$ \sid{are jump times
  of  a pure birth process} starting from $1$ and transition rate 
\[ 
q_{j,j+1} = (\cin+\cout)^{-1} j, \qquad j\ge 1.
\]
Therefore, \eqref{eq:conv_Tn} follows from applying the known convergence results of linear birth processes; see
\cite[Theorem~5.11.4]{resnick:1992} and
\cite{kendall:1966, waugh:1971}, among other sources.

\medskip

\noindent (ii) By Theorem~\ref{thm:embed}, \sid{to show}
\eqref{eq:conv_pair}, it suffices to show that as $n\to\infty$,
\begin{align}
\left(\frac{I_v(T_n-T_v)}{n^{\cin}},\, \frac{O_v(T_n-T_v)}{n^{\cout}}\right)\, &\convas \, 
\left(\frac{\sigma^{\text{in}}_v e^{-\frac{\cin}{\cin+\cout}
                                                                                 T_v}}{W^{\cin}},\, \frac{\sigma^{\text{out}}_v e^{-\frac{\cout}{\cin+\cout} T_v}}{W^{\cout}}\right),\label{eq:conv_pair_birth}. 
\end{align}
With \eqref{eq:conv_Tn} available, \sid{we} prove
\eqref{eq:conv_pair_birth} by showing the convergence of
\[
\left(e^{-\frac{\cin}{\cin+\cout}(t-T_v)}I_v(t-T_v), e^{-\frac{\cout}{\cin+\cout}(t-T_v)}O_v(t-T_v)\right),
\]
as $t\to\infty$.
According to the construction of the processes $\{\bigl(I_v(t-T_v), O_v(t-T_v): t\ge T_v\bigr)\}_{v\ge 1}$, 
we know that $(I_1(0), O_1(0))= (1,1)$.
Then applying the convergence result of a BI process in Remark~\ref{rmk:tavare}, we have for independent
 $(\sigma^{\text{in}}_1, \sigma^{\text{out}}_1)\sim \bigl(\Gamma(1+\deltain,1),\,\Gamma(1+\deltaout,1) \bigr)$,
 \[
\left(e^{-\frac{\cin}{\cin+\cout}t}I_1(t), e^{-\frac{\cout}{\cin+\cout}t}O_v(t)\right)\convas (\sigma^{\text{in}}_1, \sigma^{\text{out}}_1),\qquad t\to\infty.
\]
Moreover, it follows from \eqref{eq:convIO} and \eqref{eq:densIO} that
\beqq\label{eq:conv_IOt}
\left(e^{-\frac{\cin}{\cin+\cout}(t-T_v)}I_v(t-T_v), e^{-\frac{\cout}{\cin+\cout}(t-T_v)}O_v(t-T_v)\right)\convas (\sigma^{\text{in}}_v, \sigma^{\text{out}}_v),\qquad t\to\infty,
\eeqq
with  
$\sigma^{\text{in}}_v$ and $\sigma^{\text{out}}_v$ having the joint density as in \eqref{eq:joint_dens}.

Replacing $t$ with $T_n$ in \eqref{eq:conv_IOt} gives
\beqq\label{eq:conv_IOTn}
\left(\frac{I_v(T_n-T_v)}{e^{\frac{\cin}{\cin+\cout} T_n} },\, \frac{O_v(T_n-T_v)}{e^{\frac{\cout}{\cin+\cout} T_n} }\right)\convas \left(\sigma^{\text{in}}_v e^{-\frac{\cin}{\cin+\cout} T_v}, \sigma^{\text{out}}_v e^{-\frac{\cout}{\cin+\cout} T_v}\right), \quad \text{as }n\to\infty.
\eeqq
Therefore, combining \eqref{eq:conv_Tn} and \eqref{eq:conv_IOTn} gives \eqref{eq:conv_pair}.
For $v\ge 2$, the independence of $(\sigma^{\text{in}}_v,
\sigma^{\text{out}}_v)$ and $T_v$ follows from the construction and
\sid{the independence from $W$ follows from \cite[p. 443]{resnick:1992}};
this completes the proof of \eqref{eq:conv_pair_birth}.

\medskip 

\noindent (iii) We verify \eqref{eq:conv_pair_max}
 by showing that as $n\to\infty$,
\beqq\label{eq:conv_pair_max_IOTn}
\left(\max_{v\ge 1}\frac{I_v(T_n-T_v)}{e^{\frac{\cin}{\cin+\cout} T_n} },\, \max_{v\ge 1}\frac{O_v(T_n-T_v)}{e^{\frac{\cout}{\cin+\cout} T_n} }\right)\convas \left(\max_{v\ge 1}\sigma^{\text{in}}_v e^{-\frac{\cin}{\cin+\cout} T_v}, \max_{v\ge 1}\sigma^{\text{out}}_v e^{-\frac{\cout}{\cin+\cout} T_v}\right).
\eeqq
Then combining \eqref{eq:conv_pair_max_IOTn} with \eqref{eq:conv_Tn}
gives \sid{the result.}
We use the proof machinery in \cite[Proposition~3.1]{athreya:ghosh:sethuraman:2008} to show \eqref{eq:conv_pair_max_IOTn}, which
is summarized in the following lemma.
\begin{Lemma}\label{lemma:max}
Let ${a_{n,i} : 1 \le i \le n}_{n\ge 1}$ be a double array of non-negative numbers
such that
\begin{enumerate}
\item[(1)] For all $i\ge 1$, $\lim_{n\to\infty} a_{n,i} = a_i<\infty$,
\item[(2)] $\sup_{n\ge 1}a_{n,i}\le b_i<\infty$ and
\item[(3)] $\lim_{i\to\infty}b_i = 0$.
\end{enumerate} 
Then $\max_{1\le i\le n} a_{n,i}\to \max_{i\ge 1} a_i$, as $n\to\infty$.
\end{Lemma}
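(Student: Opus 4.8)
The plan is to prove the conclusion by a standard $\liminf$--$\limsup$ sandwich, splitting the array into a fixed ``head'' block $1\le i\le K$, where hypothesis (1) gives pointwise control, and a ``tail'' block $K<i\le n$, where hypotheses (2) and (3) force the entries to be uniformly small. First I would record that the right-hand side $a_\infty:=\max_{i\ge 1}a_i$ is well defined and finite: by (1) and (2), $0\le a_i=\lim_{n\to\infty}a_{n,i}\le b_i$ for each $i$, and by (3) the bounds $b_i$ vanish, so the nonnegative sequence $(a_i)_{i\ge 1}$ tends to $0$ and hence attains its supremum, which is therefore finite.

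For the lower bound, I would fix $K\ge 1$ and note that for every $n\ge K$ we have $\max_{1\le i\le n}a_{n,i}\ge\max_{1\le i\le K}a_{n,i}$; the right-hand side converges, as $n\to\infty$, to $\max_{1\le i\le K}a_i$, since a maximum over finitely many indices of convergent sequences converges to the maximum of the limits. Therefore $\liminf_{n\to\infty}\max_{1\le i\le n}a_{n,i}\ge\max_{1\le i\le K}a_i$ for every $K$, and letting $K\to\infty$ gives $\liminf_{n\to\infty}\max_{1\le i\le n}a_{n,i}\ge a_\infty$.

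For the upper bound I would fix $\varepsilon>0$ and use (3) to choose $K=K(\varepsilon)$ with $b_i<\varepsilon$ for all $i>K$. Then for $n>K$, splitting the maximum at $K$ and applying (2) to the tail block,
$$\max_{1\le i\le n}a_{n,i}=\max\Bigl(\max_{1\le i\le K}a_{n,i},\ \max_{K<i\le n}a_{n,i}\Bigr)\le\max\Bigl(\max_{1\le i\le K}a_{n,i},\ \varepsilon\Bigr).$$
Since $\max_{1\le i\le K}a_{n,i}\to\max_{1\le i\le K}a_i\le a_\infty$ as $n\to\infty$, this gives $\limsup_{n\to\infty}\max_{1\le i\le n}a_{n,i}\le\max(a_\infty,\varepsilon)$, and letting $\varepsilon\downarrow 0$ yields $\limsup_{n\to\infty}\max_{1\le i\le n}a_{n,i}\le a_\infty$. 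Combining with the lower bound shows the limit exists and equals $a_\infty=\max_{i\ge 1}a_i$.

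I do not expect a genuine obstacle here: the only delicate point is the tail block $K<i\le n$, whose index set grows with $n$, so pointwise convergence is useless there and one genuinely needs the uniform bound (2) together with the decay (3); choosing the head/tail cutoff $K$ to depend on $\varepsilon$ and then sending $\varepsilon\downarrow 0$ is the whole substance of the argument.
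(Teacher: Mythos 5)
Your proof is correct and complete; the head/tail split at a cutoff $K=K(\varepsilon)$, with pointwise convergence handling the finite head block and hypotheses (2)--(3) uniformly controlling the growing tail block, is exactly the standard argument here, and your preliminary observation that $a_i\le b_i\to 0$ (so the supremum $\max_{i\ge 1}a_i$ is attained and finite) is the one point that needs to be said and is said correctly. The paper itself does not write out a proof of this lemma --- it only cites Proposition~3.1 of Athreya, Ghosh and Sethuraman (2008) --- and your argument is precisely the one that citation stands in for, so there is nothing to reconcile.
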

First note that for each $v\ge 1$,
\begin{align*}
I_v(T_n-T_v)e^{-\frac{\cin}{\cin+\cout} (T_n-T_v)} &\le \sup_{t\ge 0} I_v(t)e^{-\frac{\cin}{\cin+\cout} t} =: \widetilde{I}_v,\\
O_v(T_n-T_v)e^{-\frac{\cout}{\cin+\cout} (T_n-T_v)} &\le \sup_{t\ge 0} O_v(t)e^{-\frac{\cout}{\cin+\cout} t} =: \widetilde{O}_v.
\end{align*}
Let $a^I_{n,v} := I_v(T_n-T_v) e^{-\frac{\cin}{\cin+\cout} T_n}$, $a^O_{n,v} := O_v(T_n-T_v) e^{-\frac{\cout}{\cin+\cout} T_n}$ for $1\le v\le n$, and
 $b^I_v := \widetilde{I}_ve^{-\frac{\cin}{\cin+\cout} T_v}$, $b^O_v := \widetilde{O}_ve^{-\frac{\cout}{\cin+\cout} T_v}$
for $v\ge 1$. Then Lemma~\ref{lemma:max}(1) is satisfied by \eqref{eq:conv_IOTn}.
Also, for each $v\ge 1$, $\sup_{n\ge 1}a^I_{n,v} \le b^I_v$ and $\sup_{n\ge 1}a^O_{n,v} \le b^O_v$, which satisfies the criterion in Lemma~\ref{lemma:max}(2).

Following the proof of \cite[Theorem~1.1]{athreya:ghosh:sethuraman:2008}, we check the condition in Lemma~\ref{lemma:max}(3) by proving the claim that almost surely, for all $\epsilon>0$,
\beqq\label{eq:claim}
\widetilde{I}_v\le \epsilon v^{c_\text{in}},\quad \text{and}\quad \widetilde{O}_v\le \epsilon v^{c_\text{out}}, \quad\text{for all large $v$.}
\eeqq
Then as $\epsilon$ is arbitrary, it follows from \eqref{eq:conv_Tn} that 
$b^I_v\to 0$ and $b^O_v\to 0$ a.s. as $v\to\infty$. This completes
checking the three criteria in Lemma~\ref{lemma:max} and therefore
leads to \eqref{eq:conv_pair_max}.

To show \eqref{eq:claim}, we use Markov's inequality: for any $r,r'>0$ and $v\ge 2$,
\begin{align*}
\PP(\widetilde{I}_v\ge \epsilon v^{c_\text{in}})&\le \EE(\widetilde{I}_2^r)/(\epsilon^r v^{rc_\text{in}}),\\
\PP(\widetilde{O}_v\ge \epsilon v^{c_\text{out}})&\le \EE(\widetilde{O}_2^{r'})/(\epsilon^{r'}v^{r'c_\text{out}}),
\end{align*}
since $I_v$, $O_v$, $v\ge 2$ are iid SBI processes.
Hence, if we have
\beqq\label{eq:claim2}
\EE(\widetilde{I}_2^r)<\infty\,\text{ and }\,\EE(\widetilde{O}_2^r)<\infty,\qquad\text{for }\, r>c_\text{in}^{-1}, r'>c_\text{out}^{-1},
\, \text{respectively},
\eeqq
then by Borel-Cantelli, the claim in \eqref{eq:claim} is justified.
To prove \eqref{eq:claim2},
let
\begin{align*}
\widetilde{\Ia}_2:= \sup_{t\ge 0} \Ia_2(t)e^{-\frac{\cin}{\cin+\cout} t}, &\qquad \widetilde{\Ib}_2:= \sup_{t\ge 0} \Ib_2(t)e^{-\frac{\cin}{\cin+\cout} t},\\
\widetilde{\Oa}_2:= \sup_{t\ge 0} \Oa_2(t)e^{-\frac{\cout}{\cin+\cout} t}, &\qquad \widetilde{\Ob}_2:= \sup_{t\ge 0} \Ob_2(t)e^{-\frac{\cout}{\cin+\cout} t},
\end{align*}
then by the construction of $\bigl(I_2(\cdot),O_2(\cdot)\bigr)$, we have
\begin{align*}
\EE(\widetilde{I}_2^r) &= \alpha\EE(\widetilde{\Ia}_2^r)+\gamma\EE(\widetilde{\Ib}_2^r)<\infty,\\
\EE(\widetilde{I}_2^{r'}) &= \alpha\EE(\widetilde{\Oa}_2^{r'})+\gamma\EE(\widetilde{\Ob}_2^{r'})<\infty,
\end{align*}
using the assumption that $\Ia_2$, $\Ib_2$, $\Oa_2$ and $\Ob_2$ are independent BI processes so that
results in \cite[Proposition 2.6]{athreya:ghosh:sethuraman:2008} are still applicable here. 
This completes the proof of \eqref{eq:conv_pair_max_IOTn}.
\end{proof}


\section{Convergence Results on Joint Degree Distributions.}\label{sec:conv}
\subsection{Convergence of the joint degree counts.}\label{subsec:Nij_conv}
Now we analyze the convergence of the joint empirical distribution of the in- and out-degrees
$\{\bigl(\Din_v(n), \Dout_v(n)\bigr): v\in [n]\}$, using the SBI embedding technique.
Let $B(a,p)$ be a negative binomial integer valued random variable with parameters $a>0$ and $p\in(0,1)$ (abbreviated as $NB(a,p)$), and the generating function 
of $B(a,p)$ is
\[
\EE\left(s^{B(a,p)}\right) = p^a (1-(1-p)s)^{-a},\qquad 0\le s\le 1.
\]
\tw{We also use the notation $B(a, Z)$ to represent a r.v. having a mixture distribution such that the second parameter of the negative binomial
r.v. is randomized by an independent r.v. $Z$.}
\begin{Theorem}\label{thm:conv_Nij}
Let $N_{i,j}(n)$ be the number of nodes with in-degree $i$ and out-degree $j$ in graph $G(n)$, then we have
\beqq\label{eq:conv_Nij}
\frac{N_{i,j}(n)}{n} \convp \PP\bigl((\calI,\calO)=(i,j)\bigr),\qquad \text{as }n\to\infty.
\eeqq
The limit pair $(\calI,\calO)$ can be represented in distribution as:
\beqq\label{eq:conv_pij}
(\calI,\calO) \stackrel{d}{=} (1-J)(X_1, 1+Y_1) + J(1+X_2, Y_2),
\eeqq
where 
\begin{enumerate}
\item[(i)] $J$ is a Bernoulli switching variable with $\PP(J=1)=1-\PP(J=0)=\gamma$.
\item[(ii)] Suppose $\{B^{(1)}(\delta_1,p): p\in(0,1)\}$, $\{B^{(2)}({\delta'_1}, p): p\in(0,1)\}$, $\{\widetilde{B}^{(1)}(\delta_2,p): p\in(0,1)\}$ and $\{\widetilde{B}^{(2)}(\delta'_2,p): p\in(0,1)\}$, $\delta_1,\delta'_1,\delta_2,\delta'_2>0$, are four independent families of negative binomial
variables, then 
\begin{subequations}\label{eq:negbin}
\beqq\label{eq:negbin1}
(X_1, Y_1) = \left(B^{(1)}\left(\deltain,e^{-\cin T}\right), \widetilde{B}^{(1)}\left(1+\deltaout,e^{-\cout T}\right)\right),
\eeqq
\beqq\label{eq:negbin2}
(X_2, Y_2) = \left(B^{(2)}\left(1+\deltain,e^{-\cin T}\right), \widetilde{B}^{(2)}\left({\deltaout}, e^{-\cout T}\right)\right),
\eeqq
\end{subequations}
with $T$ being an exponential random variable with unit mean, independent of $J$, $B^{(1)}$, $B^{(2)}$, $\widetilde{B}^{(1)}$ and $\widetilde{B}^{(2)}$.
\end{enumerate}
\end{Theorem}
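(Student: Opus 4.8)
\medskip\noindent\textbf{Proof proposal.} The plan is to combine the embedding of Theorem~\ref{thm:embed} with the concentration estimates of Section~\ref{sec:conc}, which reduce everything to an evaluation of $\lim_n n^{-1}\EE[N_{i,j}(n)]$. Concentration gives $N_{i,j}(n)/n - n^{-1}\EE[N_{i,j}(n)]\convp 0$, so \eqref{eq:conv_Nij} follows once I show $n^{-1}\EE[N_{i,j}(n)]\to \PP((\calI,\calO)=(i,j))$ with the representation \eqref{eq:conv_pij}. By Theorem~\ref{thm:embed},
\[
\EE[N_{i,j}(n)] = \sum_{v=1}^n \PP\bigl((\Din_v(n),\Dout_v(n))=(i,j)\bigr)=\sum_{v=1}^n \PP\bigl((I_v(T_n-T_v),O_v(T_n-T_v))=(i,j)\bigr),
\]
so it remains to understand the law of one node's degree and then average over the birth index $v$.

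First I would record that $\{(\Din_v(m),\Dout_v(m)):m\ge v\}$ is a time-inhomogeneous Markov chain: by \eqref{eq:def_Lin}--\eqref{eq:def_Lout} the one-step transition depends only on $(\Din_v(m),\Dout_v(m))$ and $m$, with the in-degree (resp.\ out-degree) increasing by one with conditional probability $\cin(\Din_v(m)+\deltain)/m$ (resp.\ $\cout(\Dout_v(m)+\deltaout)/m$); via the embedding this is the SBI pair $(I_v,O_v)$ sampled along the birth times. Writing $\phi^{(v)}_m(s,t):=\EE[s^{\Din_v(m)}t^{\Dout_v(m)}]$, the transition rule gives the \emph{exact} recursion $\phi^{(v)}_{m+1}=\phi^{(v)}_m+\tfrac1m\mathcal{L}\phi^{(v)}_m$, where $\mathcal{L}=\cin(s-1)(s\partial_s+\deltain)+\cout(t-1)(t\partial_t+\deltaout)$ is the generating-function generator of a pair of independent birth immigration processes with rates $\cin(k+\deltain)$ and $\cout(k+\deltaout)$, with initial data $\phi^{(v)}_v(s,t)=\alpha t+\gamma s$ (a fresh node has degree $(0,1)$ with probability $\alpha$ and $(1,0)$ with probability $\gamma$). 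Comparing the discrete product $\prod_{k=v}^{n-1}(\mathrm{Id}+\tfrac1k\mathcal{L})$ with the semigroup $e^{(\sum_{k=v}^{n-1}1/k)\mathcal{L}}=e^{(\log(n/v)+o(1))\mathcal{L}}$ --- the usual comparison carried out through Stirling/Gamma-function asymptotics --- yields, uniformly for $s,t\in[0,1]$ and for $v/n$ bounded away from $0$, that $\phi^{(v)}_n(s,t)$ converges to the generating function of the above BI pair run for the effective time $\log(n/v)$. By Theorem~\ref{thm:tavare} and Remark~\ref{rmk:tavare} (together with the classical negative binomial law of a birth immigration process at a fixed time), that BI pair started from $(0,1)$ at time $u$ has law $\bigl(B(\deltain,e^{-\cin u}),\,1+B(1+\deltaout,e^{-\cout u})\bigr)$ and started from $(1,0)$ at time $u$ has law $\bigl(1+B(1+\deltain,e^{-\cin u}),\,B(\deltaout,e^{-\cout u})\bigr)$, with independent coordinates in each case.

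Then I would average over $v$, splitting $\sum_{v=1}^n$ at $v=\lfloor\epsilon n\rfloor$. On the ``old'' block $v\le\epsilon n$ the effective time $\log(n/v)$ is large, so $\Din_v(n)$ and $\Dout_v(n)$ both diverge and $\PP((\Din_v(n),\Dout_v(n))=(i,j))\to0$ uniformly (also available from Theorem~\ref{thm:fixed_pair} or directly from the recursion), whence $n^{-1}\sum_{v\le\epsilon n}\PP(\cdots)\to 0$. On the ``young'' block the effective time is bounded by a constant not depending on $n$, the uniform convergence of the previous step applies, and $n^{-1}\sum_{\epsilon n<v\le n}g(v/n)\to\int_\epsilon^1 g(x)\,dx$; the substitution $x=e^{-u}$ turns the resulting integral into $\int_0^{\log(1/\epsilon)}\bigl[\alpha\,\PP(\text{BI from }(0,1)\text{ at time }u=(i,j))+\gamma\,\PP(\text{BI from }(1,0)\text{ at time }u=(i,j))\bigr]e^{-u}\,du$. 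Letting $\epsilon\downarrow0$ and reading $e^{-u}\,du$ as the law of $T\sim\mathrm{Exp}(1)$ gives $\lim_n n^{-1}\EE[N_{i,j}(n)]=\EE\bigl[\alpha\,\PP(\text{BI from }(0,1)\text{ at time }T=(i,j)\mid T)+\gamma\,\PP(\text{BI from }(1,0)\text{ at time }T=(i,j)\mid T)\bigr]$, which is exactly $\PP((\calI,\calO)=(i,j))$ for $(\calI,\calO)=(1-J)(X_1,1+Y_1)+J(1+X_2,Y_2)$ with $J\sim\mathrm{Bernoulli}(\gamma)$ independent of $T$ and of the negative binomial pairs in \eqref{eq:negbin}.

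The main obstacle is conceptual rather than computational. In the embedding a node's trajectory $(I_v,O_v)$ and its ``age'' $T_n-T_v$ are \emph{not} independent: the age is read off from the very competition in which $(I_v,O_v)$ participates, and conditioning on $\{T_n-T_v=u\}$ biases the trajectory, so one may not simply integrate a BI process over the random age $T_n-T_v$. This is why I route the argument through the exact single-node generating-function recursion, for which only the Markov property --- not any independence of age and trajectory --- is needed. The remaining technical work is the uniform comparison of $\prod_{k=v}^{n-1}(\mathrm{Id}+\tfrac1k\mathcal{L})$ with the BI semigroup (standard but requiring care since $\mathcal{L}$ is unbounded), the uniform decay estimate for old nodes, and the interchange of $\lim_n$ with the sum over $v$ via the Riemann-sum limit.
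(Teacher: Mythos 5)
Your proposal is correct in outline but takes a genuinely different route from the paper. The paper makes the same first reduction (concentration brings everything down to $n^{-1}\EE[N_{i,j}(n)]$, and the embedding turns that into $n^{-1}\sum_v\PP[(I_v(T_n-T_v),O_v(T_n-T_v))=(i,j)]$), but then it stays inside the embedding: each summand is rewritten as the probability that a pair of negative binomials with parameters $e^{-\cin(\cdot)},e^{-\cout(\cdot)}$ evaluated at the age $T_n-T_v$ hits $(i,j)$, the average over $v$ is replaced by a uniform random index $U_n$, and the exact R\'enyi-type identity $T_n-T_{U_n}\eqd(\cin+\cout)T$ with $T\sim\mathrm{Exp}(1)$ delivers \eqref{eq:conv_pij} with an explicit error of at most $4/n$. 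You instead abandon the embedding at this stage and run the exact discrete generating-function recursion for a single tagged node, compare $\prod_k(\mathrm{Id}+\tfrac1k\mathcal L)$ with the BI semigroup at effective time $\log(n/v)$, and recover the $\mathrm{Exp}(1)$ mixing variable from the Riemann sum under the substitution $x=e^{-u}$; your initial condition $\alpha t+\gamma s$ and the identification $e^{-\cin u}=(v/n)^{\cin}$ match \eqref{eq:negbin}, so the two routes land on the same limit. Two comments on the comparison. First, the obstacle you single out --- that the trajectory of $(I_v,O_v)$ and its age $T_n-T_v$ are not independent, so one may not blindly integrate the fixed-time negative binomial law against the law of the random age --- is a real subtlety of the embedding approach: the paper performs precisely that substitution when it passes from $\PP[(I_v(T_n-T_v),O_v(T_n-T_v))=(i,j)]$ to $\PP[\mathcal B_v^{(n)}=(i,j)]$ (a step that is only asymptotically, not exactly, valid term by term), whereas your recursion needs only the Markov property and sidesteps the issue; on this point your route is the more careful one. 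Second, what the paper's route buys is a uniform, quantitative $O(1/n)$ bound (Remark \ref{rem:needLater}) that is reused verbatim in the proof of Theorem \ref{thm:tail_meas}; your asymptotic Riemann-sum argument would have to be supplemented with such a rate before it could serve that later purpose. The pieces you leave unproved --- the product-versus-semigroup comparison for the unbounded $\mathcal L$ (really just the Gamma-ratio asymptotics already visible in \eqref{eq:pin}--\eqref{eq:pout}) and the old-node block (where the trivial bound $n^{-1}\sum_{v\le\epsilon n}1\le\epsilon$ suffices, so no uniform decay is actually needed) --- are standard and fillable, so I see no genuine gap.
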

\begin{Remark}
{\rm
Theorem~\ref{thm:conv_Nij} coincides with the known results proven in \cite{resnick:samorodnitsky:2015,resnick:samorodnitsky:towsley:davis:willis:wan:2016},
since $e^{\cin T}$ is a Pareto random variable on $[1,\infty)$ with index $\cin^{-1}$, denoted by $Z$, and 
$e^{\cout T} = Z^{a}$, with $a:=\cout/\cin$.
}
\end{Remark}
\begin{proof}
The proof of \cite[Lemma 3.1]{wang:resnick:2015} verifies that
\[
\left|\frac{N_{i,j}(n)}{n}-\frac{\EE(N_{i,j}(n))}{n}\right| \convp 0, \qquad \text{as }n\to\infty.
\]
Hence, we are left to examine the difference $|\EE(N_{i,j}(n))/n-\PP\bigl((\calI,\calO)=(i,j)\bigr)|$. By the embedding results in Theorem~\ref{thm:embed}, we have
\begin{align}
\frac{\EE(N_{i,j}(n))}{n} &= \EE\left\lbrace \frac{1}{n}\sum_{v\in [n]} \ind_{\bigl\{\bigl(\Din_v(n),\Dout_v(n)\bigr)=(i,j)\bigr\}} \right\rbrace
= \frac{1}{n}\sum_{v\in [n]}
  \PP\Bigl(\bigl(\Din_v(n),\Dout_v(n)\bigr)=(i,j)\Bigr)\nonumber \\
&\sid{=\frac{1}{n}\sum_{v=1}^n
       \PP\left[\bigl(I_v(T_n-T_v),O_v(T_n-T_v)\bigr)=(i,j)\right] }.
\label{eq:nij_step1}
\end{align}

Suppose that $\{B^{(1)}_v(\deltain,p): v\ge 1\}$, $\{B^{(2)}_v(1+\deltain,p): v\ge 1\}$,
$\{\widetilde{B}^{(1)}_v(1+\deltaout, p): v\ge 1\}$ and $\{\widetilde{B}^{(2)}_v(\deltaout, p): v\ge 1\}$
are four independent sequences of negative binomial r.v.'s with given parameters.
Then by the distribution of a BI process (cf. \cite[Equation (2.2)]{tavare:1987} and \cite[Theorem 3.11]{ford:2009}), we have
for any $v\ge 2$, $t\ge 0$ and $k\ge 0$,
\begin{subequations}\label{eq:nij_step2}
\beqq
\PP(\Ia_v(t) = k) = \PP\left[B^{(1)}_v\left(\deltain, e^{-\frac{\cin}{\cin+\cout} t}\right)=k\right], \label{eq:nij_step2a1}
\eeqq
\beqq
\PP(\Ib_v(t) = k) = \PP\left[1+ B^{(2)}_v\left(1+\deltain, e^{-\frac{\cin}{\cin+\cout} t}\right)=k\right], \label{eq:nij_step2a2}
\eeqq
\beqq
\PP(\Oa_v(t)=k) =\PP\left[1+ \widetilde{B}^{(1)}_v\left(1+\deltaout, e^{-\frac{\cout}{\cin+\cout} t}\right)=k\right], \label{eq:nij_step2b1}
\eeqq
\beqq
\PP(\Ob_v(t)=k) = \PP\left[\widetilde{B}^{(2)}_v\left(\deltaout,
    e^{-\frac{\cout}{\cin+\cout}
      t}\right)=k\right],\label{eq:nij_step2b2} 
\eeqq
\end{subequations}
\sid{and note the quantities on the right do not depend on $v$.}
Also, recall that $\bigl(I_v(t),O_v(t)\bigr)_{v\ge 2}$, $t\ge 0$, are \tw{identically distributed} such that,
\begin{align*}
I_v(t) &= (1-J_v) \Ia_v(t) + J_v\Ib_v(t),\quad
O_v(t) = (1-J_v)\Oa_v(t) + J_v \Ob_v(t) .
\end{align*}
\tw{Since for $v\ge 2$, the processes $\Ia_v$, $\Ib_v$, $\Oa_v$ and $\Ob_v$ are independent from each other,}
we then define for any $v\ge 2$,
\begin{align*}
\mathcal{B}_v^{(n)}:=\bigl(&(1-J_v)B^{(1)}_v(\deltain, e^{-(T_n-T_v)})+J_v(1+ B^{(2)}_v(1+\deltain, e^{-(T_n-T_v)}), \\
& (1-J_v)(1+ \widetilde{B}^{(1)}_v(1+\deltaout, e^{-(T_n-T_v)})+J_v(\widetilde{B}^{(2)}_v(\deltaout, e^{-(T_n-T_v)} \bigr),
\end{align*}
\sid{and}  \eqref{eq:nij_step1} becomes,
\begin{align}
& \frac{1}{n}\EE(N_{i,j}(n))
 = \frac{1}{n}\sum_{v=1}^n \PP\left[\bigl(I_v(T_n-T_v),O_v(T_n-T_v)\bigr)=(i,j)\right]\notag \\
 = & \frac{1}{n}\sum_{v=1}^n \PP\left[\mathcal{B}^{(n)}_v= (i,j)\right]
+ \frac{1}{n} \left(\PP\left[\bigl(I_1(T_n),O_1(T_n)\bigr)=(i,j)\right]- \PP\left[\mathcal{B}^{(n)}_1=(i,j)\right]\right). \label{eq:nij_step3}
\end{align}
The last step \sid{is necessitated by}  the construction \sid{since} $(I_1(t), O_1(t))$ is a pair of independent BI processes, which is
different from the rest of the $(I_v(\cdot), O_v(\cdot))_{v\ge 2}$ pairs. Here this difference is inconsequential because
as $n\to\infty$,
\[
\frac{1}{n} \Bigl|\PP\left[\bigl(I_1(T_n),O_1(T_n)\bigr)=(i,j)\right]-
\PP\left[\mathcal{B}_1^{(n)}=(i,j)\right]\Bigr| 
\le \frac{2}{n}\to 0.
\]
So we only need to consider the first term in
\eqref{eq:nij_step3}. Let $U_n$ be a random variable uniformly
distributed on $[n-1]$ and independent of the rest. Then
\begin{align*}
\frac{1}{n}&\sum_{v=1}^n \PP\left[\mathcal{B}^{(n)}_v= (i,j)\right] \\
=& \alpha \frac{1}{n}\sum_{v=1}^n\PP\left[\bigl(B^{(1)}_v(\deltain, e^{-\frac{\cin}{\cin+\cout}(T_n-T_v)}), 
1+ \widetilde{B}^{(1)}_v(1+\deltaout, e^{-\frac{\cout}{\cin+\cout}(T_n-T_v)})\bigr)=(i,j)\right]\\
&+\gamma \frac{1}{n}\sum_{v=1}^n\PP\left[\bigl(1+ B^{(2)}_v(1+\deltain, e^{-\frac{\cin}{\cin+\cout}(T_n-T_v)}), 
\widetilde{B}^{(2)}_v(\deltaout, e^{-\frac{\cout}{\cin+\cout}(T_n-T_v)} \bigr)=(i,j)\right]\\
=& \alpha \left(1-\frac{1}{n}\right)\PP\left[\bigl(B^{(1)}_1(\deltain, e^{-\frac{\cin}{\cin+\cout}(T_n-T_{U_n})}), 
1+ \widetilde{B}^{(1)}_1(1+\deltaout, e^{-\frac{\cout}{\cin+\cout}(T_n-T_{U_n})})\bigr)=(i,j)\right]\\
&+ \gamma \left(1-\frac{1}{n}\right)\PP\left[\bigl(1+ B^{(2)}_1(1+\deltain, e^{-\frac{\cin}{\cin+\cout}(T_n-T_{U_n})}), 
\widetilde{B}^{(2)}_1(\deltaout, e^{-\frac{\cout}{\cin+\cout}(T_n-T_{U_n})} \bigr)=(i,j)\right]\\
&+ \frac{1}{n}\PP\left[\mathcal{B}^{(n)}_n= (i,j)\right],
\end{align*} 
\sid{since the distributions of
$B_v^{(1)},\widetilde{B}_v^{(1)},B_v^{(2)},\widetilde{B}_v^{(1)}$ do
not depend on $v$.}
Let $T$ be a unit exponential random variable that is independent of
$I_v, O_v$, $v\ge 1$. 
\sid{A variant of the Renyi representation for exponential order
  statistics} (see  \cite[Theorem 
3.14]{ford:2009} for details) gives  
\beqq\label{eq:nij_step4}
T_n-T_{U_n} \stackrel{d}{=} \frac{T}{(\cin+\cout)^{-1}}.
\eeqq
Define a Bernoulli random variable $J$ that is independent from $T$, $B^{(1)}_1$, $B^{(2)}_1$,
$\widetilde{B}^{(1)}_1$ and $\widetilde{B}^{(2)}_1$ with 
$\PP(J=1)=\gamma=1-\PP(J=0)$.
\tw{Then applying \eqref{eq:nij_step4} therefore gives
\begin{align*}
\frac{1}{n}&\sum_{v=1}^n \PP\left[\mathcal{B}^{(n)}_v= (i,j)\right]\\
=& \alpha\left(1-\frac{1}{n}\right)\PP\left[\bigl(B^{(1)}_1(\deltain, e^{-\cin T}),
1+ \widetilde{B}^{(1)}_1(1+\deltaout, e^{-\cout T})\bigr)=(i,j)\right]\\
&+ \gamma\left(1-\frac{1}{n}\right) \PP\left[\bigl(1+ B^{(2)}_1(1+\deltain, e^{-\cin T}), 
\widetilde{B}^{(2)}_1(\deltaout, e^{-\cout T} \bigr)=(i,j)\right] + \frac{1}{n}\left[\mathcal{B}^{(n)}_n= (i,j)\right]\\
=& \left(1-\frac{1}{n}\right)\PP\left[(1-J)\bigl(B^{(1)}_1(\deltain, e^{-\cin T}),
1+ \widetilde{B}^{(1)}_1(1+\deltaout, e^{-\cout T})\bigr)\right.\\
&\left.\qquad+ J \bigl(1+ B^{(2)}_1(1+\deltain, e^{-\cin T}), 
\widetilde{B}^{(2)}_1(\deltaout, e^{-\cout T} \bigr) = (i,j)
\right]+ \frac{1}{n}\left[\mathcal{B}^{(n)}_n= (i,j)\right]\\
=& \left(1-\frac{1}{n}\right)\PP\left[\bigl(\calI, \calO\bigr)=(i,j)\right] + \frac{1}{n}\left[\mathcal{B}^{(n)}_n= (i,j)\right].
\end{align*}
Therefore,
\[
\left|\frac{1}{n}\EE\left[N_{ij}(n)\right] - \PP\left[\bigl(\calI, \calO\bigr)=(i,j)\right]\right|\le \frac{4}{n},
\]
which leads to \eqref{eq:conv_pij} and \eqref{eq:negbin} as $n\to\infty$.}
\end{proof}
 
\begin{Remark}\label{rem:needLater}
{\rm \sid{This argument also shows that for $x>0,y>0$,
\beqq\label{eq:needLater}
\frac 1n \EE N_{>x,>y}(n)=
\PP\bigl( (\calI,\calO) \in (x,\infty]\times (y,\infty]\bigr)
+\epsilon_n(x,y),\eeqq
where
$$\sup_{x>0,y>0} |\epsilon_n(x,y) | \leq \frac 4n.$$
}}
\end{Remark}

\subsection{Convergence of the joint empirical measure.}
In this section, we investigate the convergence of the joint empirical measure:
\[
\frac{1}{k_n}\sum_{k=1}^n \epsilon_{\bigl(\Din_i(n)/b_1(n/k_n),\,\Dout_i(n)/b_2(n/k_n)\bigr)}(\cdot),
\]
with scaling functions $b_i(\cdot)$, $i=1,2$, and some intermediate sequence $k_n$ such that $k_n/n\to 0$ and $k_n\to\infty$ as $n\to\infty$.
From \eqref{eq:conv_Nij}, we have
\beqq\label{eq:emp_step1}
\frac{1}{n}\sum_{v\in [n]} \epsilon_{\bigl(\Din_v(n),\Dout_v(n)\bigr)}\bigl(\{(i,j)\}\bigr) \convp \PP\bigl((\calI,\calO) = (i,j)\bigr),\qquad n\to\infty.
\eeqq
Moreover,
\cite[Theorem~2]{resnick:samorodnitsky:towsley:davis:willis:wan:2016}
shows that the limit pair $(\calI,\calO)$ is non-standard regularly
varying, i.e. 
\beqq\label{eq:emp_step2}
\sid{n\PP\left[\left(\frac{\calI}{n^{\cin}}, \frac{\calO}{n^{\cout}}\right)\in \cdot\right]\convv\gamma V_1(\cdot) + \alpha V_2(\cdot),
\quad n\to\infty,}
\eeqq
in $M_+([0,\infty]^2\setminus\{\boldsymbol{0}\})$
and  $V_i(\cdot)$, $i=1,2$, concentrate on
$(0,\infty)^2$ with Lebesgue densities given \sid{below
in} \eqref{eq:f1} and \eqref{eq:f2}.
It is also shown in \cite{wang:resnick:2016} that the density of the limit measure is jointly regularly varying, and the relationship between the
regular variation of the limit measure and that of the limit density has been explored.

Let $b_1(t)=t^{\cin}$ and $b_2(t) = t^{\cout}$, then
heuristically, combining \eqref{eq:emp_step1} and \eqref{eq:emp_step2} gives
\begin{align}
\frac{1}{k_n}\sum_{v\in [n]} \epsilon_{\bigl(\Din_v(n)/(n/k_n)^{\cin},\,\Dout_v(n)/(n/k_n)^{\cout}\bigr)}(\cdot)
&\approx \frac{n}{k_n}\PP\left[\left(\frac{\calI}{(n/k_n)^{\cin}}, \frac{\calO}{(n/k_n)^{\cout}}\right)\in \cdot\right]\label{eq:approx_emp}\\
& \Rightarrow \gamma V_1(\cdot) + \alpha V_2(\cdot),\quad n\to\infty\nonumber
\end{align}
in $\mathbb{M}([0,\infty]^2\setminus\{\boldsymbol{0}\})$.
\sid{We} justify the approximation in \eqref{eq:approx_emp} and the convergence result is summarized in the following theorem.
\begin{Theorem}\label{thm:tail_meas}
Suppose that $\{k_n\}$ is an intermediate sequence satisfying 
\beqq\label{cond:kn}
\liminf_{n\to\infty} k_n/(n\log n)^{1/2}>0\quad \text{and}\quad
k_n/n\to 0 \quad \text{as}\quad n\to\infty,
\eeqq  
 \sid{and recall    $a=\cout/\cin$.}
Then we have
\beqq\label{eq:tailmeas}
\frac{1}{k_n}\sum_{v\in [n]}^n \epsilon_{\bigl(\Din_v(n)/(n/k_n)^{\cin},\,\Dout_v(n)/(n/k_n)^{\cout}\bigr)}(\cdot)\Rightarrow \gamma V_1(\cdot) + \alpha V_2(\cdot),
\eeqq
in $M_+([0,\infty]^2\setminus\{\boldsymbol{0}\})$, where $V_1$ and $V_2$ concentrate on $(0,\infty)^2$ with Lebesgue densities 
\begin{align}
f_1(x, y) &= \frac{x^{\deltain} y^{\deltaout-1}}{\cin\Gamma(1+\deltain)\Gamma(\deltaout)}\int_0^\infty z^{-(2+1/\cin+\deltain+a\deltaout)} e^{-x/z+y/z^{a}}\dd z,
\label{eq:f1}\\
\intertext{and}
f_2(x, y) &= \frac{x^{\deltain-1} y^{\deltaout}}{\cin\Gamma(\deltain)\Gamma(1+\deltaout)}\int_0^\infty z^{-(1+a+1/\cin+\deltain+a\deltaout)} e^{-x/z+y/z^{a}}\dd z,
\label{eq:f2}
\end{align}
respectively.
\end{Theorem}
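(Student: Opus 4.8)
The plan is to reduce the asserted weak convergence of random measures to convergence in probability on a convergence-determining class, and to split the latter into a mean part, governed by Theorem~\ref{thm:conv_Nij} and Remark~\ref{rem:needLater} together with the non-standard regular variation \eqref{eq:emp_step2}, and a fluctuation part, governed by the concentration estimates of Section~\ref{sec:conc}. Since the limit $\gamma V_1+\alpha V_2$ is deterministic and, by \eqref{eq:f1}--\eqref{eq:f2}, absolutely continuous on $(0,\infty)^2$, it charges no coordinate line, so by the standard criterion for vague convergence of Radon measures (cf.\ \cite{resnickbook:2007}) it suffices to show that, for $(x,y)$ in a countable dense subset of $(0,\infty)^2$, the empirical measure converges in probability on each of the corner $(x,\infty]\times(y,\infty]$ and the strips $(x,\infty]\times[0,\infty]$ and $[0,\infty]\times(y,\infty]$ to the corresponding $(\gamma V_1+\alpha V_2)$-mass; by inclusion--exclusion these values determine the vague limit. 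The strips are precisely the marginal statements \eqref{eq:conv_meas_in}--\eqref{eq:conv_meas_out}, which reduce to the scalar tail-empirical-measure argument of \cite{wang:resnick:2017}, so the new content is the corners.

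Fix $x,y>0$, put $u_n=(n/k_n)^{\cin}x$ and $w_n=(n/k_n)^{\cout}y$, and observe that the empirical measure of $(x,\infty]\times(y,\infty]$ equals $k_n^{-1}M_n$ with $M_n:=N_{>u_n,>w_n}(n)$, the number of nodes of $G(n)$ whose in-degree exceeds $u_n$ and out-degree exceeds $w_n$. For the mean, Remark~\ref{rem:needLater} gives
\[
\frac{1}{k_n}\,\EE M_n \;=\; \frac{n}{k_n}\,\PP\bigl((\calI,\calO)\in(u_n,\infty]\times(w_n,\infty]\bigr)\;+\;\frac{n}{k_n}\,\epsilon_n(u_n,w_n),
\]
with $\sup_{x,y>0}|\epsilon_n(x,y)|\le 4/n$, so the last term is $O(1/k_n)\to 0$. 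Since $n/k_n\to\infty$, the scaling functions $b_1(t)=t^{\cin}$, $b_2(t)=t^{\cout}$ figuring in \eqref{eq:emp_step2} are regularly varying in the real argument $t$, and $(x,\infty]\times(y,\infty]$ is a continuity set for $V_1$ and $V_2$, the first term converges to $(\gamma V_1+\alpha V_2)\bigl((x,\infty]\times(y,\infty]\bigr)$. Hence $k_n^{-1}\EE M_n$ converges to the desired limit.

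It remains to prove $k_n^{-1}(M_n-\EE M_n)\convp 0$, which is where Section~\ref{sec:conc} enters. Viewed as a function of the successive node/edge insertions, $M_n$ changes by at most a bounded amount at each step (a new edge raises one coordinate of one old node by $1$, and the new node has both degrees at most $1$), so a Doob-martingale/Azuma-type estimate along the natural filtration, as collected in Section~\ref{sec:conc}, bounds $\PP(|M_n-\EE M_n|>\eta k_n)$ for every $\eta>0$; the growth rate imposed in \eqref{cond:kn}, namely $\liminf_n k_n/(n\log n)^{1/2}>0$, is exactly what is needed to send this bound to $0$ after the normalization by $k_n$ (absorbing any union bound over threshold levels required by the argument of Section~\ref{sec:conc}). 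Combining the mean and fluctuation pieces yields $k_n^{-1}M_n\convp(\gamma V_1+\alpha V_2)\bigl((x,\infty]\times(y,\infty]\bigr)$ for every corner in the dense family, and adjoining the marginal statements for the strips gives \eqref{eq:tailmeas}.

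The main obstacle I expect is the concentration step: producing from the directed preferential-attachment dynamics a deviation bound for $M_n$ of order $(n\log n)^{1/2}$ that matches \eqref{cond:kn}, with enough uniformity in the moving thresholds $(u_n,w_n)$ to cover all rectangles simultaneously. A secondary, more routine point is justifying \eqref{eq:emp_step2} with the real argument $n/k_n$ in place of an integer, and verifying that the corners and strips indeed form a convergence-determining class for Radon measures on $[0,\infty]^2\setminus\{\bzero\}$, paying attention to the behaviour near the coordinate axes, where $V_1$ and $V_2$ put no mass but the finitely many atoms of the empirical measure may be located.
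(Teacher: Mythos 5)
Your proposal is correct and follows essentially the same route as the paper: the mean part is handled via Remark~\ref{rem:needLater} together with the non-standard regular variation \eqref{eq:emp_step2}, and the fluctuation part via the Azuma--Hoeffding concentration bounds of Section~\ref{sec:conc}, with the condition $\liminf k_n/(n\log n)^{1/2}>0$ absorbing the uniform $O(\sqrt{n\log n})$ deviation. The only cosmetic difference is that you verify convergence on a determining class of corners and strips directly, whereas the paper packages the same three estimates \eqref{eq:con_2d}, \eqref{eq:marg_in}, \eqref{eq:marg_out} into convergence in a vague metric $\rho$ before invoking \eqref{eq:emp_step2}.
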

\begin{proof}
Proving \eqref{eq:tailmeas} requires using concentration results for
\sid{degree counts $N_{i,j}(n)$ which compare counts with expected counts;
these are collected in Section \ref{sec:conc}.}
\sid{In this section we show}  for $x,y>0$,
\begin{subequations}\label{eq:ENkn}
\beqq\label{eq:ENkn1}
\left|\frac{1}{k_n}\EE\left(N_{>\left(\frac{n}{k_n}\right)^{\cin}x,\, >\left(\frac{n}{k_n}\right)^{\cout}y}(n)\right)
- \frac{n}{k_n}\, p_{>\left(\frac{n}{k_n}\right)^{\cin}x,\, >\left(\frac{n}{k_n}\right)^{\cout}y}\right| \convp 0,
\eeqq
\beqq\label{eq:ENkn2}
 \left|\frac{1}{k_n} \EE\left(\Nin_{>\left(\frac{n}{k_n}\right)^{\cin}x}(n)\right) - 
\frac{n}{k_n}\, \pin_{>\left(\frac{n}{k_n}\right)^{\cin}x}\right| \convp 0, 
\eeqq
\beqq\label{eq:ENkn3}
  \left|\frac{1}{k_n}\EE\left(\Nout_{>\left(\frac{n}{k_n}\right)^{\cout}y}(n)\right) - 
\frac{n}{k_n}\, \pout_{>\left(\frac{n}{k_n}\right)^{\cout}y}\right| \convp 0.
\eeqq
\end{subequations}
We give a proof for \eqref{eq:ENkn1} and \eqref{eq:ENkn2} and \eqref{eq:ENkn3}
follow\sid{s} from a similar argument.

Adopting the notation from the proof of Theorem~\ref{thm:conv_Nij},
\sid{and using \eqref{eq:needLater}} we have
\begin{align*}
\Bigl|\frac{1}{k_n}\EE &
  \left(N_{>\left(\frac{n}{k_n}\right)^{\cin}x,\,
  >\left(\frac{n}{k_n}\right)^{\cout}y}(n)\right) 
- \frac{n}{k_n}\, p_{>\left(\frac{n}{k_n}\right)^{\cin}x,\, >\left(\frac{n}{k_n}\right)^{\cout}y}\Bigr|\\
= & \left| \frac{n}{k_n}\frac{1}{n}\sum_{v\in [n]}\PP\left(\frac{\Din_v(n)}{(n/k_n)^{\cin}}> x,\, \frac{\Dout_v(n)}{(n/k_n)^{\cout}}> y\right)
- \frac{n}{k_n}\PP\left[\frac{\calI}{(n/k_n)^{\cin}}>x, \frac{\calO}{(n/k_n)^{\cout}}>y\right]\right|\\
= & \left|\frac{n}{k_n}\frac{1}{n}\sum_{v=1}^n\PP\left(\mathcal{B}^{(n)}_v\in \bigl((n/k_n)^{\cin}x, \infty\bigr]\times \bigl((n/k_n)^{\cout}y, \infty\bigr]\right)\right.\\
&\left.\qquad - \frac{n}{k_n}\PP\left[\frac{\calI}{(n/k_n)^{\cin}}>x, \frac{\calO}{(n/k_n)^{\cout}}>y\right]\right|\\
& + \frac{1}{k_n}\left|\PP\left(\frac{\Din_1(n)}{(n/k_n)^{\cin}}> x,\,
  \frac{\Dout_1(n)}{(n/k_n)^{\cout}}> y\right) \right.\\
&\left.\qquad \qquad
- \PP\left(\mathcal{B}^{(n)}_1\in
  \bigl((n/k_n)^{\cin}x, \infty\bigr]\times \bigl((n/k_n)^{\cout}y,
  \infty\bigr]\right)\right| \\
&\le \sid{\epsilon_n\bigl((n/k_n)^{\cin}x,(n/k_n)^{\cout}y \bigr)           + \frac{2}{k_n}\to 0,}
\end{align*}
as $n\to\infty$.

Combining concentration results in \eqref{eq:con_2d}, \eqref{eq:marg_in} and \eqref{eq:marg_out} with \eqref{eq:ENkn}
implies that for \sid{any} intermediate sequence $\{k_n\}$ satisfying \eqref{cond:kn} and $x, y> 0$, as $n\to\infty$,
\begin{subequations}\label{eq:Nkn}
\beqq
\frac{1}{k_n} \left|N_{>\left(\frac{n}{k_n}\right)^{\cin}x,\, >\left(\frac{n}{k_n}\right)^{\cout}y}(n) - 
n\, p_{>\left(\frac{n}{k_n}\right)^{\cin}x,\, >\left(\frac{n}{k_n}\right)^{\cout}y}\right| \convp 0, 
\eeqq
\beqq\label{eq:Nkn2}
\frac{1}{k_n}  \left|\Nin_{>\left(\frac{n}{k_n}\right)^{\cin}x}(n) -
n\, \pin_{>\left(\frac{n}{k_n}\right)^{\cin}x}\right| \convp 0, 
\eeqq
\beqq
\frac{1}{k_n}  \left|\Nout_{>\left(\frac{n}{k_n}\right)^{\cout}y}(n) - 
n\, \pout_{>\left(\frac{n}{k_n}\right)^{\cout}y}\right| \convp 0.
\eeqq
\end{subequations}
\tw{Define the vague metric $\rho(\cdot,\cdot)$ on $M_+([0,\infty]^2\setminus\{\origin\})$ (cf. \cite[Chapter 3.3]{resnickbook:2007}) 
as follows. There exists some sequence of 
continuous functions on $[0,\infty]^2\setminus\{\origin\}$ with compact supports, $f_i: [0,\infty]^2\setminus\{\origin\}\mapsto \RR_+$, $i\ge 1$,
and for $\mu_1,\mu_2\in M_+([0,\infty]^2\setminus\{\origin\})$,
\[
\rho(\mu_1,\mu_2) = \sum_{i=1}^\infty \frac{|\mu_1(f_i)-\mu_2(f_i)|\min 1}{2^i},
\]
where $\mu_j(f_i) := \int_{[0,\infty]^2\setminus\{\origin\}}f_i(x)\mu_j(\dd x)$, $j=1,2$, $i\ge 1$.
Then results in \eqref{eq:Nkn} imply:
as $n\to\infty$,
\beqq\label{eq:approx}
\rho\left(\frac{1}{k_n}\sum_{v\in [n]}^n \epsilon_{\bigl(\Din_v(n)/(n/k_n)^{\cin},\,\Dout_v(n)/(n/k_n)^{\cout}\bigr)}, \frac{n}{k_n}\PP\left[\left(\frac{\calI}{(n/k_n)^{\cin}}, \frac{\calO}{(n/k_n)^{\cout}}\right)\in\cdot\right]\right)\convp 0.
\eeqq}
Then \eqref{eq:tailmeas} follows from combining \eqref{eq:approx} and the vague convergence in \eqref{eq:emp_step2}, with \eqref{eq:f1} and \eqref{eq:f2} being specified in \cite[Theorem~2]{resnick:samorodnitsky:towsley:davis:willis:wan:2016}.
\end{proof}

\section{Consistency of the Hill Estimator}\label{sec:Hill}
In practice, the growth rates of in- and out-degrees are often estimated by Hill estimators as defined in \eqref{eq:hill_def}. However, despite its wide use, 
there is no theoretical justification for such estimates and the consistency has been proved only for a simple undirected preferential attachment model in \cite{wang:resnick:2017}. We now turn to \eqref{eq:conv_meas_in} and \eqref{eq:conv_meas_out} as preparations for considering consistency of the Hill estimator.

\begin{Proposition}\label{prop:conv_meas_marg}
Suppose that $\{k_n\}$ is some intermediate sequence satisfying 
\eqref{cond:kn}. Define
\begin{align*}
b_1(t) &= \left[\cin\frac{\Gamma(1+\deltain + \cin^{-1})}{\Gamma(1+\deltain)}\left(\frac{\alpha\deltain}{1+\cin\deltain}+\frac{\gamma}{\cin}\right)\right]^{\cin} t^{\cin},\\
b_2(t) &= \left[\cout\frac{\Gamma(1+\deltaout + \cout^{-1})}{\Gamma(1+\deltaout)}\left(\frac{\gamma\deltaout}{1+\cout\deltaout}+\frac{\alpha}{\cout}\right)\right]^{\cout} t^{\cout},
\end{align*}
then 
\begin{align}
\frac{1}{k_n}\sum_{v\in [n]}\epsilon_{\Din_v(n)/b_1(n/k_n)} &\convw \nu_{\cin^{-1}},\qquad\text{in }M_+((0,\infty]),\label{eq:conv_min}\\
\frac{1}{k_n}\sum_{v\in [n]}\epsilon_{\Dout_v(n)/b_2(n/k_n)} &\convw \nu_{\cout^{-1}}, \qquad\text{in }M_+((0,\infty]).\label{eq:conv_mout}
\end{align}
\end{Proposition}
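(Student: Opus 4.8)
The plan is to prove \eqref{eq:conv_min}; the companion statement \eqref{eq:conv_mout} then follows by the mirror-image argument with in-degree objects replaced by out-degree ones. Write $\iota:=\cin^{-1}$ and set $\xi_n:=\frac{1}{k_n}\sum_{v\in[n]}\epsilon_{\Din_v(n)/b_1(n/k_n)}$, a random element of $M_+((0,\infty])$. Since $\nu_{\iota}$ is a \emph{deterministic}, nonatomic Radon measure on $(0,\infty]$ whose tail function $y\mapsto\nu_{\iota}(y,\infty]=y^{-\iota}$ is continuous and strictly decreasing, the first (soft) step is to reduce $\xi_n\convw\nu_{\iota}$ to the pointwise tail statement
\[
\xi_n\bigl((y,\infty]\bigr)\;\convp\; y^{-\iota}\qquad\text{for every fixed }y>0 .
\]
Given this, an arbitrary $f\in C_c^+((0,\infty])$ is sandwiched between step functions with breakpoints $y_0<\dots<y_m$ in $(0,\infty)$; $\xi_n$ evaluated on such a step function is a finite linear combination of the tails $\xi_n(y_j,\infty]$ and hence converges in probability, and letting the mesh shrink — using that $\nu_{\iota}$ is finite on compact subsets of $(0,\infty]$ and has no atoms — yields $\xi_n(f)\convp\nu_{\iota}(f)$; dominated convergence applied to the series defining the vague metric then gives $\xi_n\convw\nu_{\iota}$ (cf. \cite[Chapter~3.3]{resnickbook:2007}).

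For the tail statement, fix $y>0$ and note $\xi_n((y,\infty])=\frac{1}{k_n}\,\Nin_{>\,y\,b_1(n/k_n)}(n)$ by the definition of $\Nin_{>\cdot}$. The chosen normalization factors through a constant: with
\[
C'_{IN}:=\cin\,\frac{\Gamma(1+\deltain+\cin^{-1})}{\Gamma(1+\deltain)}\Bigl(\frac{\alpha\deltain}{1+\cin\deltain}+\frac{\gamma}{\cin}\Bigr),
\]
we have $b_1(t)=(C'_{IN})^{\cin}t^{\cin}$, so $y\,b_1(n/k_n)=(n/k_n)^{\cin}x$ with the fixed constant $x:=(C'_{IN})^{\cin}y>0$. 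Hence the concentration estimate \eqref{eq:Nkn2} — valid under hypothesis \eqref{cond:kn} — applies with this $x$ and gives $\frac{1}{k_n}\Nin_{>\,y\,b_1(n/k_n)}(n)-\frac{n}{k_n}\,\pin_{>\,y\,b_1(n/k_n)}\convp 0$. It then remains only to evaluate the deterministic term: since $k_n/n\to0$ forces $b_1(n/k_n)\to\infty$, and since \eqref{eq:def_pin_over} together with Stirling's formula gives $\pin_{>i}\sim C'_{IN}\,i^{-\iota}$ as $i\to\infty$, we get
\[
\frac{n}{k_n}\,\pin_{>\,y\,b_1(n/k_n)}\;\sim\;\frac{n}{k_n}\,C'_{IN}\,\bigl(y\,b_1(n/k_n)\bigr)^{-\iota}
=\frac{n}{k_n}\,C'_{IN}\,y^{-\iota}\,(C'_{IN})^{-1}\Bigl(\frac{n}{k_n}\Bigr)^{-1}=y^{-\iota},
\]
the constant in $b_1$ having been designed precisely to produce this cancellation. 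Combining the last two facts yields $\xi_n((y,\infty])\convp y^{-\iota}$ for every $y>0$, which by the first step is \eqref{eq:conv_min}. Statement \eqref{eq:conv_mout} follows identically from the out-degree concentration bound in \eqref{eq:Nkn}, the asymptotics $\pout_{>j}\sim C'_{OUT}\,j^{-\cout^{-1}}$ from \eqref{eq:def_pout_over}, and the matching normalization $b_2(t)=(C'_{OUT})^{\cout}t^{\cout}$.

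I do not expect a deep obstacle here: the analytic content — controlling the marginal counts $\Nin_{>\cdot}(n)$ and $\Nout_{>\cdot}(n)$ by their expectations under \eqref{cond:kn} — is already packaged in \eqref{eq:Nkn}, whose derivation in Section~\ref{sec:conc} is where the condition $\liminf k_n/(n\log n)^{1/2}>0$ is genuinely consumed, and the cancellation of constants is built into the definitions of $b_1,b_2$. The only point in the present argument requiring a bit of care is the first reduction step, namely upgrading pointwise-in-$y$ convergence in probability of the tails to weak convergence of the random measures $\xi_n$; this is routine because the limit $\nu_{\cin^{-1}}$ is deterministic, Radon, and nonatomic.
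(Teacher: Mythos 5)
Your argument is correct, but it takes a different route from the paper's. The paper proves the Proposition in two lines by \emph{marginalizing} the joint tail-measure convergence \eqref{eq:tailmeas} of Theorem~\ref{thm:tail_meas} --- evaluating the limit $\gamma V_1+\alpha V_2$ on sets of the form $(y,\infty]\times[0,\infty]$ to identify the first marginal as $C'_{IN}\,\nu_{\cin^{-1}}$, and then rescaling by that constant. You instead bypass the joint measure entirely and prove the one-dimensional statement directly: you identify $\xi_n((y,\infty])$ with $k_n^{-1}\Nin_{>y\,b_1(n/k_n)}(n)$, invoke the marginal concentration statement \eqref{eq:Nkn2} (itself a combination of Lemma~\ref{lem:marg_concen} and \eqref{eq:ENkn2}), and evaluate the deterministic centering via the Stirling asymptotics of $\pin_{>i}$ from \eqref{eq:def_pin_over}, with the constant in $b_1$ cancelling exactly as designed. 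This is essentially the same computation the paper later performs in \eqref{eq:conv_Eoverk} inside the proof of Theorem~\ref{thm:hill}, so nothing new is being consumed; your route is shorter and more self-contained for the marginal claim (it avoids having to compute the marginals of $V_1,V_2$), while the paper's route gets the Proposition almost for free once the harder two-dimensional Theorem~\ref{thm:tail_meas} is in hand. Two small points of care in your write-up, neither fatal: the thresholds $y\,b_1(n/k_n)$ should be read through integer parts since $\Nin_{>i}$ is indexed by integers (the paper does the same silently), and your reduction of weak convergence to pointwise tail convergence should be run over a countable dense set of $y$ together with monotonicity of $y\mapsto\xi_n((y,\infty])$ to control the vague metric uniformly --- standard, as you note, because the limit is deterministic and atomless.
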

\begin{proof}
Marginalizing the results in \eqref{eq:tailmeas} gives 
\begin{align*}
\frac{1}{k_n}\sum_{v\in [n]}\epsilon_{\frac{\Din_v(n)}{(n/k_n)^{\cin}}} &\convw \cin\frac{\Gamma(1+\deltain + \cin^{-1})}{\Gamma(1+\deltain)}\left(\frac{\alpha\deltain}{1+\cin\deltain}+\frac{\gamma}{\cin}\right)\nu_{\cin^{-1}},\qquad\text{in }M_+((0,\infty]),\\
\frac{1}{k_n}\sum_{v\in [n]}\epsilon_{\frac{\Dout_v(n)}{(n/k_n)^{\cout}}} &\convw \cout\frac{\Gamma(1+\deltaout + \cout^{-1})}{\Gamma(1+\deltaout)}\left(\frac{\gamma\deltaout}{1+\cout\deltaout}+\frac{\alpha}{\cout}\right)\nu_{\cout^{-1}},\qquad\text{in }M_+((0,\infty]).
\end{align*} 
Scaling both sides by the constant appearing in the limit measure gives \eqref{eq:conv_min} and \eqref{eq:conv_mout}.
\end{proof}

With Proposition~\ref{prop:conv_meas_marg} available, we now prove the consistency of Hill estimators for in- and out-degrees.
\begin{Theorem}\label{thm:hill}
Let 
\begin{align*}
&\Din_{(1)}(n)\ge \Din_{(2)}(n) \ge\cdots\ge \Din_{(n)}(n),\\
&\Dout_{(1)}(n)\ge \Dout_{(2)}(n) \ge\cdots\ge \Dout_{(n)}(n),
\end{align*}
be order statistics for in- and out-degrees $\{\Din_v(n)\}_{v\in [n]}$, $\{\Dout_v(n)\}_{v\in [n]}$, respectively.
Define the Hill estimators for $\{\Din_v(n)\}_{v\in [n]}$ and $\{\Dout_v(n)\}_{v\in [n]}$ as
\[
H^\text{in}_{k,n} := \frac{1}{k}\sum_{i=1}^{k}\log\frac{\Din_{(i)}(n)}{\Din_{(k+1)}(n)},\qquad 
H^\text{out}_{k,n} := \frac{1}{k}\sum_{i=1}^{k}\log\frac{\Dout_{(i)}(n)}{\Dout_{(k+1)}(n)}.
\]
Then for some intermediate sequence $\{k_n\}$ satisfying 
\eqref{cond:kn}, we have as $n\to\infty$,
\beqq\label{eq:hill_cst}
H^\text{in}_{k_n,n}\convp \cin,\qquad\text{and}\qquad H^\text{out}_{k_n,n}\convp \cout.
\eeqq
\end{Theorem}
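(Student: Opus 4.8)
The plan is to run the argument of \cite[Theorem~4.2]{resnickbook:2007}, which derives Hill consistency \eqref{e:Hillconv} from the tail‑empirical‑measure convergence \eqref{eq:bnkX}, in the present setting with \eqref{eq:bnkX} replaced by the marginal convergences \eqref{eq:conv_min}--\eqref{eq:conv_mout} of Proposition~\ref{prop:conv_meas_marg}. I treat $H^\text{in}_{k_n,n}$; the out‑degree statement follows verbatim after interchanging $(\cin,\deltain,b_1,\Nin,\pin)$ with $(\cout,\deltaout,b_2,\Nout,\pout)$. Put $\mu_n:=\frac1{k_n}\sum_{v\in[n]}\epsilon_{\Din_v(n)/b_1(n/k_n)}$, so that $\mu_n\convw\nu_{\cin^{-1}}$ in $M_+((0,\infty])$ by Proposition~\ref{prop:conv_meas_marg}. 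First I would read off the threshold asymptotics: since $\nu_{\cin^{-1}}(t,\infty]=t^{-1/\cin}$, for each $\varepsilon\in(0,1)$ we have $\mu_n(1+\varepsilon,\infty]\convp(1+\varepsilon)^{-1/\cin}<1$ and $\mu_n(1-\varepsilon,\infty]\convp(1-\varepsilon)^{-1/\cin}>1$, and since $\{\Din_{(k_n+1)}(n)>t\,b_1(n/k_n)\}=\{\mu_n(t,\infty]\ge(k_n+1)/k_n\}$ while $(k_n+1)/k_n\to1$, this forces $\Din_{(k_n+1)}(n)/b_1(n/k_n)\convp1$. (That $\Din_{(k_n+1)}(n)\ge1$ with probability tending to $1$ follows from \eqref{eq:p0} and $k_n/n\to0$, so $H^\text{in}_{k_n,n}$ is eventually well defined; on the complementary, asymptotically negligible, event we may define it arbitrarily.)

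Next, by Slutsky $(\mu_n,\Din_{(k_n+1)}(n)/b_1(n/k_n))\convw(\nu_{\cin^{-1}},1)$, and since the rescaling operation $(m,s)\mapsto m(s\,\cdot\,)$ is continuous at $(\nu_{\cin^{-1}},1)$ (as in the proof of \cite[Theorem~4.2]{resnickbook:2007}), the self‑normalised measure $\mu_n^\#:=\frac1{k_n}\sum_{v\in[n]}\epsilon_{\Din_v(n)/\Din_{(k_n+1)}(n)}$ again satisfies $\mu_n^\#\convw\nu_{\cin^{-1}}$. Because the summands in \eqref{eq:hill_def} with $\Din_{(i)}(n)=\Din_{(k_n+1)}(n)$ contribute $0$, the Hill sum equals $\sum_{v\in[n]}\bigl(\log(\Din_v(n)/\Din_{(k_n+1)}(n))\bigr)^+$, and using $(\log x)^+=\int_1^\infty u^{-1}\ind_{\{u<x\}}\,du$ together with Fubini I would rewrite
\[
H^\text{in}_{k_n,n}=\int_1^\infty\mu_n^\#(u,\infty]\,\frac{du}{u}.
\]
For $A>2$ the function $g_A(v):=\min(\log v,\log A)\,\ind_{\{v>1\}}$ is nonnegative, continuous on $(0,\infty]$, and supported in the compact set $[1,\infty]$, so $\int_1^A\mu_n^\#(u,\infty]\,u^{-1}\,du=\mu_n^\#(g_A)\convp\nu_{\cin^{-1}}(g_A)=\cin(1-A^{-1/\cin})$, which tends to $\cin$ as $A\to\infty$.

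What remains, and this is the hard part, is to show that $\int_A^\infty\mu_n^\#(u,\infty]\,u^{-1}\,du$ is uniformly small for large $A$. On the event $B_n:=\{\Din_{(k_n+1)}(n)>\tfrac{1}{2}b_1(n/k_n)\}$ one has $\mu_n^\#(u,\infty]\le\mu_n(u/2,\infty]=\frac1{k_n}\Nin_{>(u/2)b_1(n/k_n)}(n)$, and the integrand vanishes once $u>U_n:=2n/b_1(n/k_n)$ because $\Din_v(n)\le n$. Taking expectations and inserting the marginal analogue of \eqref{eq:needLater} (obtained exactly as in the proof of Theorem~\ref{thm:conv_Nij}) together with $\pin_{>i}\le C\,i^{-1/\cin}$ (from \eqref{eq:def_pin_over} and Stirling) gives
\[
\EE\Bigl[\ind_{B_n}\!\int_A^\infty\mu_n^\#(u,\infty]\,\frac{du}{u}\Bigr]\le\frac{n}{k_n}\int_A^{U_n}\Bigl(C\bigl(\tfrac{u}{2}\,b_1(n/k_n)\bigr)^{-1/\cin}+\frac{C'}{n}\Bigr)\frac{du}{u}.
\]
Because of the exact normalising constant in $b_1$, the quantity $\frac{n}{k_n}b_1(n/k_n)^{-1/\cin}$ is a fixed constant, so the first piece is at most $C''A^{-1/\cin}$, while the second is $\frac{C'}{k_n}\log(U_n/A)=O(\log n/k_n)\to0$ under \eqref{cond:kn}; on $B_n^c$ the integral is bounded by $\log U_n=O(\log n)$, and the concentration bounds of Section~\ref{sec:conc} make $\PP(B_n^c)$ decay polynomially in $n$, so its contribution is $O(n^{-c}\log n)\to0$. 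Combining, $\limsup_n\EE\int_A^\infty\mu_n^\#(u,\infty]\,u^{-1}\,du\le C''A^{-1/\cin}$; with Markov's inequality this and the preceding display let me send $n\to\infty$ and then $A\to\infty$ to conclude $H^\text{in}_{k_n,n}\convp\int_1^\infty u^{-1-1/\cin}\,du=\cin$, and symmetrically $H^\text{out}_{k_n,n}\convp\cout$.

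The main obstacle is precisely this last tail bound. In the i.i.d.\ case Potter's inequalities directly dominate $\frac nk\overline F(u\,b(n/k))$ uniformly; here there is no single ``typical'' node, so one must instead control $\frac1{k_n}\EE\Nin_{>u\,b_1(n/k_n)}(n)$ uniformly in $u\ge A$ and in $n$, which forces us to combine the expected‑count estimate \eqref{eq:needLater}, the concentration inequalities of Section~\ref{sec:conc}, the power‑law asymptotics \eqref{eq:def_pin_over}, and the explicit form of $b_1$. The hypothesis \eqref{cond:kn} enters exactly here: it is what absorbs the residual $O(\log n/k_n)$ and $O(n^{-c}\log n)$ terms so that only the clean power‑law piece $C''A^{-1/\cin}$ survives.
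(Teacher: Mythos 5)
Your proposal is correct and follows the same overall route as the paper: express $H^\text{in}_{k_n,n}$ as $\int_1^\infty \hat\nu_n^\text{in}(y,\infty]\,\frac{\dd y}{y}$ for the self-normalised tail empirical measure, obtain that measure's convergence from Proposition~\ref{prop:conv_meas_marg} together with $\Din_{(k_n)}(n)/b_1(n/k_n)\convp 1$ and the continuity of the rescaling map $(\nu,c)\mapsto\nu(c\,\cdot)$, handle the integral over a compact range by continuous mapping, and kill the tail of the integral via the second converging together theorem. The only genuine divergence is in how the tail $\int_M^\infty$ is controlled. The paper conditions on $\bigl|\Din_{(k_n)}(n)/b_1(n/k_n)-1\bigr|<\eta$, applies Markov's inequality, and then invokes regular variation of $U(t):=\EE\bigl(\Nin_{>t}(n)\bigr)$ plus Karamata's theorem to get the bound $C M^{-\cin^{-1}}$ (a step stated somewhat loosely, since $U$ also depends on $n$). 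You instead condition on $\Din_{(k_n+1)}(n)>\tfrac12 b_1(n/k_n)$ and bound $\tfrac1n\EE \Nin_{>x}(n)$ uniformly in $x$ by the marginal analogue of \eqref{eq:needLater} together with $\pin_{>i}\le C i^{-1/\cin}$, which yields the same $C'' A^{-1/\cin}$ bound by direct integration; this is more explicit and avoids the $n$-dependence issue in the Karamata step, at the cost of having to separately dispose of the $O(\log n/k_n)$ remainder and the event $B_n^c$ (where, as you note, \eqref{cond:kn} and the Section~\ref{sec:conc} concentration bounds suffice; if the constant $\tfrac12$ is too generous relative to the concentration constant, replace it by a smaller $\theta$ so that $\theta^{-1/\cin}-1$ dominates $C\sqrt{n\log n}/k_n$). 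Both arguments deliver \eqref{eq:cont_integral}, and the rest is identical.
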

\begin{proof}
From \eqref{eq:conv_min} and \eqref{eq:conv_mout}, we conclude by inversion and \cite[Proposition~3.2]{resnickbook:2007}
that in $D(0,\infty]$
\[
\frac{\Din_{([k_n t])}(n)}{b_1(n/k_n)}\convp t^{-\cin}\quad \text{and}\quad \frac{\Dout_{([k_n t])}(n)}{b_2(n/k_n)}\convp t^{-\cout}.
\]
Therefore,
\begin{align}
\left(\frac{1}{k_n}\sum_{v\in [n]}\epsilon_{\Din_v(n)/b_1(n/k_n)}, \frac{\Din_{(k_n)}(n)}{b_1(n/k_n)}\right) &\convw \bigl(\nu_{\cin^{-1}}, 1\bigr)
\quad \text{in } M_+((0,\infty])\times (0,\infty),\label{eq:hill_in}\\
\left(\frac{1}{k_n}\sum_{v\in [n]}\epsilon_{\Dout_v(n)/b_2(n/k_n)}, \frac{\Dout_{(k_n)}(n)}{b_2(n/k_n)}\right) &\convw \bigl(\nu_{\cout^{-1}}, 1\bigr)
\quad \text{in } M_+((0,\infty])\times (0,\infty).\label{eq:hill_out}
\end{align}

Define the operator
\[
S: M_+((0,\infty])\times (0,\infty) \mapsto M_+((0,\infty])
\]
by 
\[
S(\nu, c)(A) = \nu(cA).
\]
By the proof in \cite[Theorem~4.2]{resnickbook:2007}, the mapping $S$ is continuous at $(\nu_{c_i^{-1}},1)$, $i=1,2$.
Therefore, applying the continuous mapping $S$ to the joint weak convergence in \eqref{eq:hill_in} and \eqref{eq:hill_out} gives 
\begin{align*}
\frac{1}{k_n}\sum_{v\in [n]}\epsilon_{\Din_v(n)\big/\Din_{(k_n)}(n)} &\convw \nu_{\cin^{-1}},\qquad\text{in }M_+((0,\infty]),\\
\frac{1}{k_n}\sum_{v\in [n]}\epsilon_{\Dout_v(n)\big/\Dout_{(k_n)}(n)} &\convw \nu_{\cout^{-1}}, \qquad\text{in }M_+((0,\infty]).
\end{align*}
Then the rest of the proof is similar to arguments in the proof of \cite[Theorem~11]{wang:resnick:2017}. Here we only include proofs for
 the consistency $H^\text{in}_{k_n,n}$ and that for $H^\text{out}_{k_n,n}$ follows from the same argument.
 Define $\hat{\nu}_n^\text{in}(\cdot) := \frac{1}{k_n}\sum_{v\in [n]}\epsilon_{\Din_v(n)\big/\Din_{(k_n)}(n)}(\cdot)$.
 First observe
\[
H^\text{in}_{k_n, n} = \int_1^\infty \hat{\nu}_n^\text{in}(y,\infty] \frac{\dd y}{y}.
\]
Then fix $M>0$ large and 
define a mapping $f\mapsto \int_1^M f(y)\frac{\dd y}{y}$ from
$D(0,\infty] \mapsto \RR_+$. This map is a.s. continuous so
\[
\int_1^M \hat{\nu}_n^\text{in}(y,\infty] \frac{\dd y}{y}\convp \int_1^M \nu_{\cin^{-1}}(y,\infty] \frac{\dd y}{y},
\]
and it remains to show 
by the second converging together theorem 
(cf. \cite[Theorem~3.5]{resnickbook:2007}) that
\beqq\label{eq:cont_integral}
\lim_{M\to\infty} \limsup_{n\to\infty} \PP\left(\int_M^\infty \hat{\nu}_n^\text{in}(y,\infty] \frac{\dd y}{y}>\varepsilon\right) = 0.
\eeqq
The probability in \eqref{eq:cont_integral} is
\begin{align*}
\PP & \left(\int_M^\infty \hat{\nu}_n^\text{in}(y,\infty] \frac{\dd y}{y}>\varepsilon\right)
\le\, \PP\left(\int_M^\infty \hat{\nu}_n^\text{in}(y,\infty] \frac{\dd y}{y}>\varepsilon, \left|\frac{\Din_{(k_n)}(n)}{b_1(n/k_n)}-1\right|<\eta\right)\\
&+ \PP\left(\int_M^\infty \hat{\nu}_n^\text{in}(y,\infty] \frac{\dd y}{y}>\varepsilon, \left|\frac{\Din_{(k_n)}(n)}{b_1(n/k_n)}-1\right|\ge\eta\right)\\
\le\, \PP & \left(\int_M^\infty \frac{1}{k_n}\sum_{i=1}^n \epsilon_{\Din_i(n)/b_1(n/k_n)}((1-\eta)y,\infty] \frac{\dd y}{y}>\varepsilon\right)\\
 +& \PP\left(\left|\frac{\Din_{(k_n)}(n)}{b_1(n/k_n)}-1\right|\ge\eta\right) =: A+B.
\end{align*}
By \eqref{eq:hill_in}, $B\to 0$ as $n\to\infty$, and using the Markov inequality, $A$ is bounded by
\begin{align*}
&\frac{1}{\varepsilon} \EE\left(\int_M^\infty \frac{1}{k_n}\sum_{v=1}^n \epsilon_{\Din_v(n)/b_1(n/k_n)}((1-\eta)y,\infty] \frac{\dd y}{y}\right)\\
=& \frac{1}{\varepsilon}\EE\left(\int_{M(1-\eta)}^\infty \frac{1}{k_n}\sum_{v=1}^n \epsilon_{\Din_v(n)/b_1(n/k_n)}(y,\infty] \frac{\dd y}{y}\right)
\le \frac{1}{\varepsilon}\int_{M(1-\eta)}^\infty \frac{1}{k_n}\EE\left(\Nin_{>[b_1(n/k_n) y]}(n)\right) \frac{\dd y}{y}.
\end{align*}
Using Stirling's formula, \eqref{eq:Nkn2} gives that for $y>0$,
\beqq\label{eq:conv_Eoverk}
\frac{1}{k_n}\EE\left(\Nin_{>[b_1(n/k_n) y]}(n)\right)\rightarrow y^{-\cin^{-1}}.
\eeqq
Let $U(t) := \EE(\Nin_{>t}(n))$ and \eqref{eq:conv_Eoverk} becomes: for $y>0$,
\[
\frac{1}{k_n} U(b_1(n/k_n) y) \rightarrow y^{-\cin^{-1}},\quad \text{as }n\to\infty.
\]
Since $U(\cdot)$ is a non-increasing function, $U\in RV_{-\cin^{-1}}$ by \cite[Proposition 2.3(ii)]{resnickbook:2007}.
Therefore, Karamata's theorem gives
\[
A \le \frac{1}{\varepsilon}\int_{M(1-\eta)}^\infty \frac{1}{k_n}\EE\left(\Nin_{>[b_1(n/k_n) y]}(n)\right) \frac{\dd y}{y}
\sim C(\delta, \eta) M^{-\cin^{-1}},
\]
with some positive constant $C(\delta, \eta)>0$.
Also, $M^{-\cin^{-1}}\to 0$ as $M\to\infty$, and \eqref{eq:cont_integral} follows.
\end{proof}

\section{Concentration of degree counts}\label{sec:conc}
In this section, we collect concentration results for the degree counts that are useful in the proofs in Theorem~\ref{thm:tail_meas}.
\begin{Lemma}\label{lemma:concen}
Define $N_{>i, >j}(n) := \sum_{v\in [n]} \ind_{\left\{\Din_v(n)>i, \Dout_v(n)>i\right\}}$.
Then for $\deltain>0$, there exists a constant $C>6$ such that as $n\to\infty$,
\beqq\label{eq:con_2d}
\PP\left(\max_{i,j} \left|N_{>i, >j}(n) - \EE(N_{>i, >j}(n))\right|\ge C\bigl(1+ \sqrt{n\log n}\bigr)\right) = o(1).
\eeqq
\end{Lemma}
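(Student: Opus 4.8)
The plan is to use the Azuma--Hoeffding concentration inequality applied to the Doob martingale obtained by exposing the graph one step at a time, as is done in \cite[Section 4]{bollobas:borgs:chayes:riordan:2003} and \cite[Lemma 3.1]{wang:resnick:2015}. Fix $i,j$ for the moment. Let $\mathcal{G}_m := \sigma\{G(1),\ldots,G(m)\}$ for $0\le m\le n$, and set $M_m := \EE(N_{>i,>j}(n)\mid \mathcal{G}_m)$, so that $M_0 = \EE(N_{>i,>j}(n))$ and $M_n = N_{>i,>j}(n)$. The key structural fact is an \emph{edge-exchangeability / Lipschitz} bound: changing the outcome of the coin flip and attachment choice at a single step $m+1$ (equivalently, resampling $\{E_j^{(m)}\}$ while keeping everything else fixed) alters the final configuration $\bfD(n)$ in a controlled way. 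Concretely, using the recursive construction \eqref{eq:recursive}, rerouting one edge at stage $m+1$ changes the in-/out-degree of at most two existing nodes by $\pm 1$ and reassigns the degree of the freshly born node; this perturbation then propagates, but because attachment probabilities are \emph{linear} in the degrees, a standard coupling argument (couple the two chains after the perturbed step and let them evolve using the same exponential clocks wherever possible) shows that the expected number of nodes whose $(\Din,\Dout)$ value differs at time $n$ is bounded by an absolute constant. Hence $|M_{m+1}-M_m|\le c$ for some absolute $c$, uniformly in $m$, $i$, $j$, and $n$.

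With the bounded-difference property in hand, Azuma's inequality gives, for each fixed $(i,j)$ and each $t>0$,
\[
\PP\bigl(|N_{>i,>j}(n)-\EE N_{>i,>j}(n)|\ge t\bigr)\le 2\exp\!\left(-\frac{t^2}{2nc^2}\right).
\]
Next I would take a union bound over the relevant range of $(i,j)$. Since $\sum_v(\Din_v(n)+\Dout_v(n)) = 2n$ by \eqref{eq:sum}, both $\max_v\Din_v(n)$ and $\max_v\Dout_v(n)$ are at most $n$, so $N_{>i,>j}(n)=0$ deterministically once $i\ge n$ or $j\ge n$; thus it suffices to take the maximum over the $O(n^2)$ pairs with $0\le i,j<n$ (one can in fact restrict further, but $n^2$ is more than enough). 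Choosing $t = C'\sqrt{n\log n}$ with $C'$ large relative to $c$, the union bound yields
\[
\PP\Bigl(\max_{i,j}|N_{>i,>j}(n)-\EE N_{>i,>j}(n)|\ge C'\sqrt{n\log n}\Bigr)\le 2n^2\exp\!\left(-\frac{(C')^2\log n}{2c^2}\right)=2n^{2-(C')^2/(2c^2)},
\]
which is $o(1)$ as soon as $(C')^2>4c^2$. Absorbing the harmless additive $1$ and enlarging the constant so that $C>6$, this is exactly \eqref{eq:con_2d}.

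The main obstacle is establishing the bounded-difference constant $c$, i.e.\ controlling how a single-step perturbation propagates through the preferential-attachment dynamics. The naive worst case is not bounded (a perturbation could in principle affect many later attachment decisions), so the argument must exploit linearity of the attachment rule to show that the \emph{expected} discrepancy stays $O(1)$; this is precisely the kind of estimate carried out in \cite{bollobas:borgs:chayes:riordan:2003} and \cite{wang:resnick:2015}, and I would adapt their coupling. A cleaner alternative, which avoids reproving this, is to cite \cite[Lemma 3.1]{wang:resnick:2015}: their argument already yields the bounded-difference inequality for counts of the form $\sum_v \ind_{\{(\Din_v(n),\Dout_v(n))\in A\}}$ for arbitrary $A\subseteq\mathbb{N}^2$, and $\{(k,\ell):k>i,\ell>j\}$ is just one such $A$. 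One subtlety worth a remark: because the martingale differences are bounded uniformly in $(i,j)$ by the \emph{same} $c$, the Azuma bound is uniform in $(i,j)$, which is what makes the crude $n^2$ union bound work; no more delicate chaining over the index set is needed given the $\sqrt{n\log n}$ scale.
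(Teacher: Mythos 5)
Your overall skeleton --- a Doob martingale from exposing the graph one step at a time, a uniform bounded-difference constant, Azuma--Hoeffding for each fixed $(i,j)$, and a crude union bound over the $O(n^2)$ pairs with $0\le i,j<n$ (using that $N_{>i,>j}(n)=0$ once $i$ or $j$ exceeds $n$) --- is exactly the paper's proof, and your final computation $2n^2\exp\{-C^2\log n/(2c^2)\}=o(1)$ for $C$ large enough matches the paper's $2n^{-(C^2/18-2)}$ with $c=3$, $C>6$.

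The one genuine gap is in how you propose to obtain the bounded-difference constant, which you correctly identify as the crux. You suggest coupling the two chains after a single-step perturbation and claiming that ``the expected number of nodes whose $(\Din,\Dout)$ value differs at time $n$ is bounded by an absolute constant.'' This forward-propagation estimate is not what is needed and is not obviously true: under the natural coupling, a discrepancy created at step $m$ alters the attachment rates of the discrepant nodes at every subsequent step, each mismatch spawns new discrepant nodes, and the resulting recursion for the expected discrepancy count gives a bound growing in $n/m$ rather than $O(1)$. The paper (following van der Hofstad, Prop.~8.4) sidesteps propagation entirely: since the probability that node $v$ receives an edge at step $n+1$ depends on $G(n)$ only through $(\Din_v(n),\Dout_v(n))$ and $n$, the conditional probability $\PP(\Din_v(n)>i,\Dout_v(n)>j\mid G(m))$ is a function of $(\Din_v(m),\Dout_v(m))$ alone. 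Writing $M_{m-1}$ via an auxiliary graph $G'$ that agrees with $G$ up to time $m-1$ and evolves independently thereafter, $|M_m-M_{m-1}|$ is then bounded by the number of nodes whose degree pairs differ between $G(m)$ and $G'(m)$ --- a comparison at time $m$, not at time $n$ --- which is at most $3$ deterministically because the two graphs differ in a single edge. Your fallback of simply citing the bounded-difference lemma of Wang--Resnick (2015) is legitimate and is essentially what the paper does, but the coupling-to-time-$n$ argument you sketch as the primary route would need to be replaced by this ``marginal Markov'' observation to close the proof.
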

\begin{proof}
The proof of \eqref{eq:con_2d} follows from a similar argument as in the proof of \cite[Proposition 8.4]{vanderHofstad:2017}. 
We include it here to make it self-contained. Define a martingale 
\[
M_m := \EE\bigl(N_{>i,>j}(n)|G(m)\bigr) = \sum_{v\in [n]} \PP\left(\Din_v(n)>i, \Dout_v(n)>j|G(m)\right),\quad m\le n.
\]
For $m\ge 2$, we define a new graph $G'(s)$ by $G'(s)=G(s)$ for $s\le m-1$, while $s\mapsto G'(m)$ evolves independently of $\{G(s): s\ge m-1\}$,
following the preferential attachment rule given in Section~\ref{subsec:PA}.
Denote the in- and out-degrees of the node $v$ in $G'(n)$ by $(\Din)'_v(n), (\Dout)'_v(n)$,
we then have
\beqq\label{eq:Mm}
M_{m-1} = \sum_{v\in [n]} \PP\left((\Din)'_v(n)>i, (\Dout)'_v(n)>j|G(m-1)\right).
\eeqq
Since the evolution of $s\mapsto G'(s)$ is independent of that of $\{G(s): s\ge m-1\}$ for $s\ge m-1$, it makes no difference whether we condition on $G(m-1)$ or $G(m)$ in \eqref{eq:Mm}. Hence, we have
\begin{align}
M_m- &M_{m-1} \label{eq:M_diffa}\\
&= \sum_{v\in [n]}\bigl[\PP\bigl(\Din_v(n)>i, \Dout_v(n)>j\big|G(m)\bigr) - \PP\bigl((\Din)'_v(n)>i, (\Dout)'_v(n)>j\big|G(m)\bigr)\bigr].\nonumber
\end{align}

Since the evolution of $n\mapsto \bigl( \Din_v(n),\Dout_v(n) \bigr)$ for $n\ge m$ only depends on $\bigl( \Din_v(m),\Dout_v(m) \bigr)$, then
\begin{align*}
&\PP\bigl(\Din_v(n)>i, \Dout_v(n)>j\big|G(m)\bigr) = \PP\bigl(\Din_v(n)>i, \Dout_v(n)>j\big|\bigl( \Din_v(m),\Dout_v(m) \bigr)\bigr),\\
&\PP\bigl((\Din)'_v(n)>i, (\Dout)'_v(n)>j\big|G(m)\bigr) \\
&= \EE\Bigl\{   \PP\bigl[(\Din)'_v(n)>i, (\Dout)'_v(n)>j\big|\bigl( (\Din)'_v(m),(\Dout)'_v(m) \bigr)\bigr] \Big| G(m) \Bigr\}.
\end{align*}
Then \eqref{eq:M_diffa} becomes
\begin{align}
M_m- M_{m-1} & \label{eq:M_diffb}\\
= \sum_{v\in [n]}\EE & \Bigl\{  \PP\bigl[\Din_v(n)>i, \Dout_v(n)>j\big|\bigl( \Din_v(m),\Dout_v(m) \bigr)\bigr] \nonumber\\
- & \PP\bigl[(\Din)'_v(n)>i, (\Dout)'_v(n)>j\big|\bigl( (\Din)'_v(m),(\Dout)'_v(m) \bigr)\bigr] \Big| G(m) \Bigr\}.\nonumber
\end{align}
It is important to note that 
\begin{align*}
\PP\bigl[\Din_v(n)>i, & \Dout_v(n)>j\big|\bigl( \Din_v(m),\Dout_v(m) \bigr)\bigr] \\
&= \PP\bigl[(\Din)'_v(n)>i, (\Dout)'_v(n)>j\big|\bigl( (\Din)'_v(m),(\Dout)'_v(m) \bigr)\bigr],
\end{align*}
as long as $\bigl( \Din_v(m),\Dout_v(m) \bigr) = \bigl( (\Din)'_v(m),(\Dout)'_v(m) \bigr)$, because the two graphs are constructed based on the same 
preferential attachment rule. Thus,
\begin{align*}
\Big|\PP\bigl[\Din_v(n)>i, & \Dout_v(n)>j\big|\bigl( \Din_v(m),\Dout_v(m) \bigr)\bigr] \\
&- \PP\bigl[(\Din)'_v(n)>i, (\Dout)'_v(n)>j\big|\bigl( (\Din)'_v(m),(\Dout)'_v(m) \bigr)\bigr]\Big|\\
\le &\ind_{\left\{\bigl( \Din_v(m),\Dout_v(m) \bigr) \neq \bigl( (\Din)'_v(m),(\Dout)'_v(m) \bigr)\right\}}.
\end{align*}
So we conclude that \eqref{eq:M_diffb} is bounded by:
\begin{align*}
|M_m &- M_{m-1} |\\
\le \sum_{v\in [n]} \EE & \Bigl\{  \big|\PP\bigl[\Din_v(n)>i, \Dout_v(n)>j\big|\bigl( \Din_v(m),\Dout_v(m) \bigr)\bigr]\\
- & \PP\bigl[(\Din)'_v(n)>i, (\Dout)'_v(n)>j\big|\bigl( (\Din)'_v(m),(\Dout)'_v(m) \bigr)\bigr]\big| \Big| G(m) \Bigr\}\\
\le \sum_{v\in [n]} \EE & \left(\ind_{\left\{\bigl( \Din_v(m),\Dout_v(m) \bigr) \neq \bigl( (\Din)'_v(m),(\Dout)'_v(m) \bigr)\right\}}\middle|G(m)\right)\\
= \EE & \left(\sum_{v\in [n]} \ind_{\left\{\bigl( \Din_v(m),\Dout_v(m) \bigr) \neq \bigl( (\Din)'_v(m),(\Dout)'_v(m) \bigr)\right\}}\middle|G(m)\right).
\end{align*}
Note that $\bigl( \Din_v(m-1),\Dout_v(m-1) \bigr) \neq \bigl( (\Din)'_v(m-1),(\Dout)'_v(m-1) \bigr)$ for all $1\le v\le m-1$ by construction, and since
changing an edge will change the in- and out-degrees for at most 3 nodes, then
\[
|M_m - M_{m-1} | \le 3.
\]

Next, we use the Azuma-Hoeffding inequality to prove \eqref{eq:con_2d}. Since $N_{>i,>j}(n) = 0$ for $i,j>n$, then
\begin{align*}
\PP & \left(\max_{i,j} \left|N_{>i, >j}(n) - \EE(N_{>i, >j}(n))\right|\ge C \sqrt{n\log n}\right)\\
& \le \sum_{i=0}^{n-1} \sum_{j=0}^{n-1} \PP \left(\left|N_{>i, >j}(n) - \EE(N_{>i, >j}(n))\right|\ge C \sqrt{n\log n}\right)\\
&\le n^2\cdot 2\exp\left\{-\frac{C^2 \log n}{2\cdot 3^2}\right\} = 2n^{-(C^2/18 - 2)}.
\end{align*}
Therefore,  \eqref{eq:con_2d} follows from taking $C>6$.
\end{proof}

Results in Lemma~\ref{lem:marg_concen} also follows from the argument in \cite[Proposition 8.4]{vanderHofstad:2017} Since the details of this proof machinery has been given in the proof of Lemma~\ref{lemma:concen}, they are omitted here.
\begin{Lemma}\label{lem:marg_concen}
For $\deltain, \deltaout >0 $, there exist constants $C_\text{in}, C_\text{out}>3\sqrt{2}$, such that as $n\to\infty$,
\begin{align}
\PP & \left(\max_{i\ge 0}\bigl|\Nin_{>i}(n) -  \EE(\Nin_{>i}(n))\bigr|\ge C_\text{in}(1+\sqrt{n\log n})\right) = o(1),\label{eq:marg_in}\\
\intertext{and}
\PP & \left(\max_{j\ge 0}\bigl|\Nout_{>j}(n) -  \EE(\Nout_{>i}(n))\bigr|\ge C_\text{out}(1+\sqrt{n\log n})\right) = o(1).\label{eq:marg_out}
\end{align}
\end{Lemma}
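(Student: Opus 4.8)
The plan is to transcribe the Doob--martingale argument of Lemma~\ref{lemma:concen} almost verbatim, the only genuine change being that we union-bound over a single index rather than a pair. For fixed $i\ge 0$ I would introduce the martingale $M_m^{(i)}:=\EE\bigl(\Nin_{>i}(n)\mid G(m)\bigr)=\sum_{v\in[n]}\PP\bigl(\Din_v(n)>i\mid G(m)\bigr)$ for $1\le m\le n$, so that $M_1^{(i)}=\EE(\Nin_{>i}(n))$ (as $G(1)$ is deterministic) and $M_n^{(i)}=\Nin_{>i}(n)$.

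Next I would establish the bounded-difference estimate $|M_m^{(i)}-M_{m-1}^{(i)}|\le 3$, exactly as in the proof of Lemma~\ref{lemma:concen}: for $m\ge 2$ one resamples the $m$-th construction step to obtain an independent copy $G'(m)$ that agrees with $G(m-1)$ on the first $m-1$ steps; one uses that the conditional law of the terminal in-degrees given $G(m)$ depends on $G(m)$ only through the stage-$m$ degree vector; one bounds the per-node discrepancy $\bigl|\PP(\Din_v(n)>i\mid G(m))-\PP((\Din)'_v(n)>i\mid G(m))\bigr|$ by the conditional expectation of the indicator that node $v$ has a different degree in $G(m)$ and $G'(m)$; and finally one notes that a single edge addition perturbs the degrees of at most three nodes. (Since only in-degrees enter here a sharper constant would be available, but keeping $3$ lets us quote the machinery already in place.)

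Then I would apply the Azuma--Hoeffding inequality to get, for each fixed $i$, $\PP\bigl(|\Nin_{>i}(n)-\EE(\Nin_{>i}(n))|\ge \Cin\sqrt{n\log n}\bigr)\le 2\exp\{-\Cin^2\log n/(2\cdot 3^2)\}=2n^{-\Cin^2/18}$, and then union-bound over $i=0,1,\dots,n-1$, which is legitimate because $\Nin_{>i}(n)=0$ for $i\ge n$, to obtain $\PP\bigl(\max_{i\ge 0}|\Nin_{>i}(n)-\EE\,\Nin_{>i}(n)|\ge \Cin\sqrt{n\log n}\bigr)\le 2n^{1-\Cin^2/18}$, which is $o(1)$ as soon as $\Cin>3\sqrt2$; the additive ``$1+$'' in the statement absorbs small-$n$ effects and the harmless $n/(n-1)$ factor coming from there being $n-1$ martingale increments. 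The bound \eqref{eq:marg_out} then follows verbatim with $\Din,\deltain,\Cin$ replaced by $\Dout,\deltaout,\Cout$.

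I do not expect a real obstacle: the only step requiring care is the bounded-difference bound, i.e. confirming both that resampling step $m$ changes the in-degree of only a bounded number of nodes and that the remaining evolution is governed solely by the stage-$m$ degrees, and this is precisely what was already verified in the proof of Lemma~\ref{lemma:concen}. Consequently the present lemma reduces to reusing that argument with the trivial bookkeeping change $n^2\mapsto n$ in the union bound, which is why the constant threshold improves from $C>6$ there to $C_\text{in},C_\text{out}>3\sqrt2$ here.
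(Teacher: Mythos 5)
Your proposal is correct and follows exactly the route the paper intends: the paper's ``proof'' of this lemma is literally the one-line remark that it follows from the same Doob-martingale/Azuma--Hoeffding machinery as Lemma~\ref{lemma:concen}, with the union bound now over $n$ values of a single index rather than $n^2$ pairs, which is precisely why the constant threshold drops from $C>6$ to $C_{\text{in}},C_{\text{out}}>3\sqrt{2}$. Your bounded-difference bound of $3$, the resulting tail bound $2n^{1-C^2/18}$, and the handling of the degenerate increment at $m=1$ all match the paper's argument.
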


\bibliography{./bibfile}
\end{document}